\tikzset{>=latex}
\newtheorem{lemma}{Lemma}[section]
\newtheorem{theorem}[lemma]{Theorem}
\newtheorem{corollary}[lemma]{Corollary}
\newtheorem{proposition}[lemma]{Proposition}
\theoremstyle{definition}
\newtheorem{remark}[lemma]{Remark}
\newenvironment{example}
  {\pushQED{\qed}\examplex}
  {\popQED\endexamplex}
\newcommand{\binomial}[2]{\Bigl(%
  \genfrac{}{}{0pt}{}{\raisebox{-1pt}{$#1$}}{\raisebox{2pt}{$#2$}} \Bigr)}
\newcommand{\CC}{\ensuremath{\mathbb{C}}}
\newcommand{\PP}{\ensuremath{\mathbb{P}}}
\newcommand{\ZZ}{\ensuremath{\mathbb{Z}}}
\newcommand{\OO}{\ensuremath{\mathcal{O}}}
\DeclareMathOperator{\codim}{codim}
\DeclareMathOperator{\defect}{def}
\DeclareMathOperator{\rank}{rank}
\DeclareMathOperator{\Sym}{Sym}
\begin{document}
\title[Discriminant loci]%
{Ramification and Discriminants of Vector Bundles \\
  and a Quick Proof of Bogomolov's Theorem}

\author[H.~Abo]{Hirotachi Abo}
\address{Hirotachi Abo: Department of Mathematics, University of Idaho,
  Moscow, Idaho 83844--1103, United States of America;
  {\normalfont \texttt{abo@uidaho.edu}}}
\author[R.~Lazarsfeld]{Robert Lazarsfeld}
\address{Robert Lazarsfeld: Department of Mathematics, Stony Brook University,
  Stony Brook, New York 11794, United States of America;
  {\normalfont \texttt{robert.lazarsfeld@stonybrook.edu}}}
\author[G.G.~Smith]{Gregory G. Smith}
\address{Gregory G.{} Smith: Department of Mathematics and Statistics, Queen's
  University, Kingston, Ontario, K7L 3N6, Canada;
  {\normalfont \texttt{ggsmith@mast.queensu.ca}}} 

\thanks{2020 \emph{Mathematics Subject Classification}. 14J60; 14C17, 14N05}

\dedicatory{Dedicated to Giorgio Ottaviani on the occasion of his sixtieth
  birthday.}

\begin{abstract}
  By analyzing degeneracy loci over projectivized vector bundles, we
  recompute the degree of the discriminant locus of a vector bundle and
  provide a new proof of the Bogomolov instability theorem.
\end{abstract}

\maketitle

\addtocounter{section}{-1}
\section{Introduction}

\noindent
Let $X$ be an $n$-dimensional smooth complex projective variety and let $E$ be
a globally generated vector bundle on $X$ of rank $e \leqslant n$. The
projective space $\PP^r = \PP \bigl( H^{0}(X, E)^* \!\bigr)$ parameterizes
sections of $E$ up to scalars. The \emph{discriminant} of $E$ is the locus in
$\PP^r$, typically a hypersurface, defined by
\[
  \Delta(E) \coloneqq \bigl\{ s \in \PP^r \mathrel{\big|} \text{the zero scheme
    $\operatorname{Zeroes}(s)$ of $s$ is singular} \big\} \, .
\]
The closed algebraic set $\operatorname{Zeroes}(s)$ is understood to have its
natural scheme structure: when $e = n$, $\Delta(E)$ consists of those sections
that vanish at something other than $\int c_n(E)$ distinct points.  There are
various situations where it is of interest to calculate the degree of
$\Delta(E)$.  This comes up, for instance, in connection with eigenvalues of
tensors \cite{ASS}. In \cite{A}, the first author derives a formula for the
degree when $e = n$ and $X = \PP^n$.

The first purpose of this note is to give a very quick derivation of a formula
for the (virtual) degree of $\Delta(E)$ reproving some results from \cite{LM}.
For example, when $e = n$, we show that the expected degree of $\Delta(E)$ is
given by
\[
  \delta(E)
  = \int_X  \bigl( K_X + c_1(E) \!\bigr) \; c_{n-1}(E)  + n \, c_n(E)  \, .
\]
If each section $s$ in $\Delta(E)$ is singular at several points, then the
actual degree of the discriminant hypersurface is smaller than its postulated
one. However, when $E$ is very ample and $1$-jet spanned, we also show that
$\Delta(E)$ is irreducible of the expected degree.

As one might expect, the basic idea is to compute the class of the singular
locus of the universal zero-locus over $\PP^r$. It turns out that a somewhat
related computation leads to an extremely quick proof of the Bogomolov
instability theorem for vector bundles of rank $2$ on an algebraic surface,
reducing the statement in effect to the Riemann--Hurwitz formula. The
existence of a proof along these lines seems to have been known to the
experts, but as far as we can tell it is not generally familiar. We therefore
take this occasion to present the argument.  Some time ago, Langer
\cite{Langer}*{Appendix} gave an even quicker, but related proof, using the
fact that stability is preserved under pulling back by generically finite
morphisms.

The formula for the ramification locus is derived in Section~1. In Section~2,
we show that, when $E$ is very ample and $1$-jet spanned, the discriminant
locus is irreducible of the expected degree. The proof of the Bogomolov
instability theorem occupies Section~3.

\subsection*{Conventions}
We work throughout over the complex numbers $\CC$.  For any vector space $V$
or vector bundle $E$, $\PP(V)$ or $\PP(E)$ denotes the projective space of
one-dimensional quotients.  Given a smooth variety $X$, the Chow ring of $X$
is $A^{\bullet}(X)$ (or, if the reader prefers, this is the even cohomology
ring $H^{2\bullet}(X, \ZZ)$). We write $c_i(E)$ and $s_i(E)$ for the $i$-th
Chern and Segre classes of a vector bundle $E$ whereas $c(E)$ and $s(E)$ are
the corresponding total Chern and Segre classes. Following the convention of
\cite{Fulton}*{Example~3.2.7}, we use the notation
$c(E - F) \coloneqq c(E)/c(F) = c(E) \; s(F)$ for the ``difference'' of the
total Chern classes of two bundles. Finally, given a class $\alpha$ in
$A^\bullet(X)$, the component of $\alpha$ in codimension $k$ is
$\alpha_k \in A^k(X)$.

\section{Ramification Locus}

\noindent
In this section, we derive a formula for ramification class of certain
morphisms from projectivized vector bundles.  To be more explicit, fix an
$n$-dimensional smooth complex projective variety $X$ and consider a
globally-generated vector bundle $E$ on $X$ of rank $e$ such that
$e \leqslant n$.

Let $V_E \coloneqq H^0(X, E)$ be the vector space of global sections of $E$
and set $r \coloneqq \dim_\CC V_E - 1$.  The trivial vector bundle on $X$ with
fibre $V_E$ is denoted $V_E \otimes_{\CC} \OO_X$ and the kernel of the
evaluation map $\operatorname{ev}_E \colon V_E \otimes_{\CC} \OO_X \to E$ is
$M_E \coloneqq \operatorname{Ker}( \operatorname{ev}_E)$.  It follows that
$M_E$ is a vector bundle of rank $r-e+1$ sitting in the short exact sequence
\[
  \begin{tikzcd}
    0 \arrow[r]
    & M_E \arrow[r]
    & V_E \otimes_{\CC} \OO_X \arrow[r, "\operatorname{ev}_E"]
    & E \arrow[r]
    & 0 \, . 
  \end{tikzcd}
\]
Applying the duality functor $(-)^* \coloneqq \mathcal{H}\!\textit{om}(-,
\OO_{X})$, we obtain the short exact sequence
\[
  \begin{tikzcd}
    0 \arrow[r] & E^* \arrow[r] & (V_E \otimes_{\CC} \OO_X \!)^* \arrow[r]
    & M_E^* \arrow[r] & 0 \, . 
  \end{tikzcd}
\]
The surjective map onto $M_E^*$ identifies the projectivization $\PP(M_E^*)$
with a closed subscheme in the product
$\PP\bigl(\! (V_E \otimes_{\CC} \OO_X \!)^* \!\bigr)\! = X \times \PP(V_E^*)$
where $V_E^{*}$ is the dual vector space of $V_E$.  Specifically, we have
$\PP(M_E^*) = \{ (x,[s]) \in X \times \PP(V_E^*) \mathrel{|} \text{$s(x) = 0$}
\}$.  Let $p_E \colon X \times \PP(V_E^*) \to X$ be the projection onto the
first factor.  For notational simplicity, we also use $p_E$ for the
restriction to $\PP(M_E^*)$.  Let $q_E \colon \PP(M_E^*) \to \PP(V_E^*)$ be
the restriction of the projection from $X \times \PP(V_E^*)$ onto the second
factor $\PP(V_E^*)$.  When the vector bundle $E$ is unnecessary, we omit the
subscripts on $V$, $M$, $p$, and $q$.

Guided by Example~14.4.8 in \cite{Fulton}, the \emph{ramification locus}
$R(q)$ of the map $q \colon \PP(M^*) \to \PP(V^*)$ is the $(r-1)$-st
degeneracy locus of the induced differential
$dq \colon \smash{q^* \Omega_{\PP(V^*)} \to \Omega_{\PP(M^*)}}$;
\[
  R(q) \coloneqq \bigl\{ x \in \PP(M^*) \mathrel{\big|}
  \text{rank of map $dq$ at the point $x$ is at most $r-1$} \bigr\}
  = \operatorname{Zeroes}(\textstyle\bigwedge\nolimits^{\!r} dq)
  \, .
\]
Since $\PP(V^*)$ and $\PP(M^*)$ have dimension $r$ and $n+r-e$, the
determinantal subscheme $R(q)$ has codimension at most
$\bigl( r - (r-1) \!\bigr) \bigl(n+r-e - (r-1) \!\bigr) = n-e+1$; see
\cite{Fulton}*{p.~242}.  The next proposition provides a formula for the
\emph{ramification class} $[R(q)]$ in the Chow ring
$A^{\bullet} \!\bigl( \PP(M^*) \!\bigr)$.

\begin{proposition}
  \label{p:deg}
  When the ramification locus $R(q)$ has codimension $n-e+1$, its class in
  $A^{\bullet} \!\bigl( \PP(M^*) \!\bigr)$ is
  $[R(q)] = \bigl\{\! c (p^* \Omega_X) \; s \bigl(p^* E^* \otimes
  \OO_{\PP(M^*)}(-1) \!\bigr) \!\bigr\}_{\!n-e+1}$ and the degree of its
  pushforward is
  \[
    \deg {q}_*[R(q)]
    = \int_X  p_*  \!\Bigl(\! [R(q)] \; \smash{c_1 \!\bigl( \OO_{\PP(M^*)}(1)
    \!\bigr)}^{r-1} \!\Bigr) \, .
  \]
\end{proposition}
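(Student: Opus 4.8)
The proposal has two separate claims to establish. For the first, I would apply the Thom–Porteous formula to the degeneracy locus $R(q)$ of the differential $dq\colon q^*\Omega_{\PP(V^*)} \to \Omega_{\PP(M^*)}$. Since $dq$ drops rank to at most $r-1$ on a space where the source has rank $r$, the relevant Porteous determinant expresses $[R(q)]$ as the codimension-$(n-e+1)$ part of the Chern class quotient $c(\Omega_{\PP(M^*)})/c(q^*\Omega_{\PP(V^*)})$, that is $\{c(\Omega_{\PP(M^*)}) \, s(q^*\Omega_{\PP(V^*)})\}_{n-e+1}$. The plan is then to rewrite this quotient in the more transparent form stated in the proposition. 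First I would use the relative Euler sequence for $q\colon \PP(M^*) \to \PP(V^*)$ together with the fact that $V^* \otimes \OO$ is trivial to identify $c(q^*\Omega_{\PP(V^*)})$ up to the contributions of $\OO_{\PP(M^*)}(1)$, and similarly analyze $\Omega_{\PP(M^*)}$ via the two projections from $X \times \PP(V^*)$.

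The key computational step is to produce a clean expression for the ``difference'' $\Omega_{\PP(M^*)} - q^*\Omega_{\PP(V^*)}$ in the Grothendieck group, since only this difference enters the Porteous formula. I would exploit that $\PP(M^*)$ sits inside $X \times \PP(V^*)$ as $\PP(M^*) = \{(x,[s]) : s(x)=0\}$, so that its cotangent sheaf is governed by the restriction of $\Omega_{X \times \PP(V^*)} = p^*\Omega_X \oplus q^*\Omega_{\PP(V^*)}$ twisted by the normal bundle data coming from the evaluation sequence. Combining the Euler sequences on both projective bundles, the $q^*\Omega_{\PP(V^*)}$ factors should cancel in the quotient, leaving precisely $c(p^*\Omega_X)$ in the numerator and the Segre class of $p^*E^* \otimes \OO_{\PP(M^*)}(-1)$ in the denominator, where the twist by $\OO_{\PP(M^*)}(-1)$ records the tautological subbundle of $M^*$. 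This is the step I expect to be the main obstacle: keeping track of the tautological twists and correctly identifying the normal bundle of $\PP(M^*)$ so that the cancellation is exact rather than merely formal.

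For the second claim, the degree formula, I would start from $\deg q_*[R(q)] = \int_{\PP(V^*)} q_*[R(q)]$, valid because $q_*[R(q)]$ is a zero-cycle class when $R(q)$ has the expected codimension and $q$ restricts to a generically finite map onto its image. The aim is to replace the pushforward by $q$ with a pushforward by $p$ to $X$. Since $\PP(V^*)$ is a genuine projective space of dimension $r$, I would pass to degrees against $X$ by writing the integral over $X$ and using that $p\colon \PP(M^*) \to X$ is a projective bundle whose relative hyperplane class is $c_1(\OO_{\PP(M^*)}(1))$. The point is that integrating a top-dimensional class over $\PP(V^*)$ can be computed fibrewise: because $q$ is the restriction of the second projection and $\PP(V^*) = \PP^r$, pushing $[R(q)]$ forward to $\PP(V^*)$ and taking degree is the same as multiplying by the $(r-1)$-st power of the hyperplane class pulled back along $q$ and then integrating, and by the projection formula $q^*c_1(\OO_{\PP(V^*)}(1)) = c_1(\OO_{\PP(M^*)}(1))$. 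Finally, iterating the pushforward first along $q$ and then recognizing the result as a $p$-pushforward to $X$ gives the stated identity
\[
  \deg q_*[R(q)] = \int_X p_*\!\Bigl([R(q)]\; c_1\bigl(\OO_{\PP(M^*)}(1)\bigr)^{r-1}\Bigr)\,;
\]
the only care needed here is to confirm that the two tautological line bundles agree under $q^*$, which follows from the compatibility of the embedding $\PP(M^*) \hookrightarrow X \times \PP(V^*)$ with the projections.
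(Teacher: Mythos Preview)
Your approach is essentially the paper's: apply Thom--Porteous to $dq$, then simplify the difference $c(\Omega_{\PP(M^*)} - q^*\Omega_{\PP(V^*)})$ using the embedding $\PP(M^*) \hookrightarrow X \times \PP(V^*)$ and the identification of the conormal bundle. The paper executes this via the conormal sequence for $\PP(M^*)\subset X\times\PP(V^*)$, the split sequence for $\Omega_{X\times\PP(V^*)}|_{\PP(M^*)}$, and the snake lemma, which gives $\operatorname{Coker}(dq)\cong\operatorname{Coker}(\theta\circ\delta)$ and hence $c_{n-e+1}(\Omega_{\PP(M^*)}-q^*\Omega_{\PP(V^*)}) = c_{n-e+1}(p^*\Omega_X - \mathcal{I}/\mathcal{I}^2)$. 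Then $\mathcal{I}/\mathcal{I}^2 \cong p^*E^*\otimes\OO_{\PP(M^*)}(-1)$ because $\PP(M^*)$ is the zero scheme of a regular section of $p^*E\otimes\OO_{X\times\PP(V^*)}(1)$---precisely the ``normal bundle data'' you flag as the main obstacle. Your passing reference to ``the relative Euler sequence for $q$'' is a red herring (the map $q$ is not a projective-bundle projection), but your subsequent remarks about the cotangent of the product and the normal bundle point in the right direction.

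One slip in the second part: $q_*[R(q)]$ is \emph{not} a zero-cycle. Since $R(q)$ has dimension $r-1$ and $\PP(V^*)$ has dimension $r$, the pushforward $q_*[R(q)]$ is a divisor class on $\PP(V^*)$, and its degree is obtained by intersecting with $c_1\bigl(\OO_{\PP(V^*)}(1)\bigr)^{r-1}$---which you do correctly invoke a few lines later. Once that is fixed, your projection-formula argument is exactly the paper's: use $q^*\OO_{\PP(V^*)}(1)=\OO_{\PP(M^*)}(1)$ to pull the hyperplane power back to $\PP(M^*)$, then push forward along $p$ to $X$.
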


\begin{proof}
  Since $R(q)$ has codimension $n-e+1$, the Thom--Porteous
  formula~\cite{Fulton}*{Theorem 14.4} establishes that
  $[R(q)] = c_{n-e+1} \bigl( \Omega_{\PP(M^*)}- q^* \Omega_{\PP(V^*)}
  \bigr)$.  Hence, it suffices to prove that
  \[
    c_{n-e+1} \bigl( \Omega_{\PP(M^*)}- q^* \Omega_{\PP(V^*)} \bigr)
    = c_{n-e+1} \!\bigl( p^* \Omega_X- p^* E^* \otimes
    \OO_{\PP(M^*)}(-1) \!\bigr)
    \, .
  \]
  By combining the two short exact sequences
  \[
    \begin{tikzcd}[row sep = 0.0em]
      0 \ar[r]
      & {\mathcal{I}^{}_{\PP(M^*)} / \mathcal{I}^{2}_{\PP(M^*)}} \ar[r, "\delta"]
      & \Omega_{X \times \PP(V^*)} \big|_{\PP(M^*)} \arrow[r]
      & \Omega_{\PP(M^*)} \ar[r]
      & 0 \, \phantom{,}\\
      0 \arrow[r]
      & q^* \Omega_{\PP(V^*)} \ar[r]
      & \Omega_{X \times \PP(V^*)} \big|_{\PP(M^*)} \ar[r, "\theta"]
      & p^* \Omega_X \ar[r]
      & 0 \, , 
    \end{tikzcd}
  \]
  we obtain the commutative diagram:
  \[
    \begin{tikzcd}[row sep = 1.75em]
      & & 0 \arrow[d] & & \\[-2pt]
      & & q^* \Omega_{\PP(V^*)} \ar[d] \ar[dr, "dq"] & & & \\[-2pt]
      0 \arrow[r]
      & \mathcal{I}^{}_{\PP(M^*)} / \mathcal{I}^{2}_{\PP(M^*)} \ar[r, "\delta"] \arrow[dr]
      & \Omega_{X \times \PP(V^*)} \big|_{\PP(M^*)} \ar[r] \ar[d, "\theta"]
      & \Omega_{\PP(M^*)} \arrow[r] & 0 \, .  \\[-2pt]
      & & p^* \Omega_X \arrow[d] & & \\[-2pt]
      & & 0 & &
    \end{tikzcd}
  \]
  The snake lemma shows that
  $\operatorname{Coker}(dq) \cong \operatorname{Coker}(\theta \circ
  \delta)$, so we deduce that
  \[
    c_{n-e+1} \!\bigl( \Omega_{\PP(M^*)} - q^* \Omega_{\PP(V^*)} \bigr)
    = c_{n-e+1} \!\bigl( p^* \Omega_{X} - \mathcal{I}^{}_{\PP(M^*)} /
    \mathcal{I}^{2}_{\PP(M^*)} \bigr) \, .
  \]
  
  To prove the first part, it remains to show that the conormal bundle
  $\smash{\mathcal{I}^{}_{\PP(M^*)} / \mathcal{I}^{2}_{\PP(M^*)}}$ on
  $\PP(M^*)$ is isomorphic to the vector bundle
  $p^* E^* \otimes \OO_{\PP(M^*)}(-1)$.  As a closed subscheme of
  $X \times \PP(V^*)$, the projectivization $\PP(M^*)$ is the zero scheme of a
  regular section of $p^* E \otimes \OO_{X \times \PP(V^*)}(1)$; see
  \cite{Fulton}*{Appendix~B.5.6}.  Tensoring the Koszul complex associated to
  this regular section with $\OO_{\PP(M^*)}$ produces the desired isomorphism
  $\smash{p^* E^* \otimes \OO_{\PP(M^*)}(-1) \cong \mathcal{I}_{\PP(M^*)}
    \otimes \OO_{\PP(M^*)} \cong \mathcal{I}^{}_{\PP(M^*)} /
    \mathcal{I}^{2}_{\PP(M^*)}}$.

  For the second part, observe that
  $\OO_{\PP(M^*)}(1)= q^* \OO_{\PP(V^*)}(1)$; see
  \cite{Lazarsfeld}*{Example~6.1.5}. It follows from the projection formula
  that the degree of pushforward is
  \begin{align*}
    \deg q_* [R(q)]
    &= \int_{\PP(V^*)} q_* [R(q)] \;
      \smash{c_1 \!\bigl( \OO_{\PP(V^*)}(1) \!\bigr)}^{r-1} 
      = \int_{\PP(M^*)}  q^* \!\Bigl(\! q_*[R(q)] \;
      \smash{c_1 \!\bigl( \OO_{\PP(V^*)}(1) \!\bigr)}^{r-1} \!\Bigr) \\ 
    &= \int_{\PP(M^*)} [R(q)] \;
      \smash{ c_1 \!\bigl(\OO_{\PP(M^*)}(1) \!\bigr)}^{r-1} 
      = \int_X p_* \!\Bigl( [R(q)] \;
      \smash{c_1 \!\bigl( \OO_{\PP(M^*)}(1) \!\bigr)}^{r-1} \!\Bigr)
      \, .  \qedhere
  \end{align*}
\end{proof} 

In the following examples, we examine three special cases expressing
ramification class as a polynomial in the Chern classes for $E$ and
$\Omega_{X}$.  From the defining short exact sequence of the kernel bundle
$M_E$, we see that
$p_* \, \smash{c_1 \!\bigl( \OO_{\PP(M^*)}(1) \!\bigr)}^{r-e+i} = s_i(M) =
c_i(E)$ for all nonnegative integers $i$.

\begin{example}[$e=1$]
  \label{e:e=1}
  Suppose that the vector bundle $E$ has rank $1$.  When ramification locus
  $R(q)$ has codimension $n$, Proposition~\ref{p:deg} implies that
  \begin{align*}
    [R(q)]
    &= \bigl\{\! c(p^* \Omega_X) \; s \bigl( p^*E^* \otimes
      \OO_{\PP(M^*)}(-1) \!\bigr) \!\bigr\}_{\!n} 
      = \smash{\sum_{i=0}^n} \, c_{n-i} (p^* \Omega_X) \; (-1)^i
      \smash{c_1 \!\bigl( p^* E^* \otimes \OO_{\PP(M^*)}(-1) \!\bigr)}^{i}
    \\
    &= \sum_{i=0}^n c_{n-i}(p^* \Omega_X) \;
      \sum_{j=0}^i \binomial{i}{j}  c_1( p^* E)^j \,
      \smash{c_1 \!\bigl( \OO_{\PP(M^*)}(1) \!\bigr)}^{i-j} \, ,
  \end{align*}
  and
  $\deg q_*[R(q)] = \displaystyle\sum_{i=0}^n (i+1) \int_X c_{n-i}(\Omega_X) \;
  c_1(E)^i$. \qedhere
\end{example}

\begin{example}[$n = e$]
  \label{e:n=e} 
  Suppose that the rank of the vector bundle $E$ equals the dimension of its
  underlying variety $X$.  When $R(q)$ has codimension $1$,
  Proposition~\ref{p:deg} implies that
  \begin{align*}
    [R(q)]
    &= \bigl\{\! c (p^* \Omega_X) \; s \bigl( p^* E^* \otimes
      \OO_{\PP(M^*)}(-1) \!\bigr) \!\bigr\}_{\!1}  
      = c_1 ( p^* \Omega_X)
      - c_1 \!\bigl( p^* E^* \otimes \OO_{\PP(M^*)}(-1) \!\bigr) \\[-2pt]
    &= c_1( p^* \Omega_X) + c_1(p^* E)
      + n \, c_1 \bigl( \OO_{\PP(M^*)}(1) \!\bigr)   
  \end{align*}
  and
  $\deg q_*[R(q)] = \displaystyle\int_X \bigl( c_1(\Omega_X) + c_1(E)
  \!\bigr) \, c_{n-1}(E) + n \, c_n(E)$. \qedhere
\end{example}
  
\begin{example}[$e = n-1$]
  Suppose that the rank of $E$ is the dimension of $X$ minus $1$.  Observe
  that
  \begin{align*}
    s_2(p^*E)
    &= s_1(p^* E)^2-c_2(p^* E^*) = c_1(p^* E)^2 - c_2(p^* E) \, , \\
    s_2 \!\bigl( p^* E^* \otimes \OO_{\PP(M^*)}(-1) \!\bigr)
    & = \binomial{n}{n-2} \smash{c_1 \!\bigl( \OO_{\PP(M^*)}(1) \!\bigr)}^2
      -n \, s_1(p^*E^*) \; \smash{c_1 \!\bigl( \OO_{\PP(M^*)}(1) \!\bigr)}
      + s_2(p^*E^*) \\[-2pt]
    &= \binomial{n}{2} \smash{c_1 \!\bigl( \OO_{\PP(M^*)}(1) \!\bigr)}^2
      - n \, c_1(p^* E) \; \smash{c_1 \!\bigl( \OO_{\PP(M^*)}(1) \!\bigr)}
      + c_1(p^* E)^2 - c_2(p^* E) \, ;
  \end{align*}
  see \cite{Fulton}*{p.~50 and Example~3.1.1}.  When $R(q)$ codimension $2$,
  Proposition~\ref{p:deg} implies that
  \begin{align*}
    [R(q)]
    &= \bigl\{ c(p^* \Omega_X) \;
      s \bigl( p^* E^* \otimes \OO_{\PP(M^*)}(-1) \!\bigr) \!\bigr\}_{\!2} \\
    &= c_2(p^* \Omega_X) + c_1(p^* \Omega_X) \;
      s_1 \!\bigl( p^* E^*  \otimes \OO_{\PP(M^*)}(-1) \!\bigr)
      + s_2 \bigl( p^* E^* \otimes \OO_{\PP(M^*)}(-1) \!\bigr) \\
    &= c_2(p^* \Omega_X) + c_1(p^* \Omega_X) \;
      \!\Bigl(\! c_1(p^* E)+(n-1) \; \smash{c_1 \!\bigl( \OO_{\PP(M^*)}(1)
      \!\bigr)} \!\Bigr) \\[-2pt]
    &\phantom{WWWW} + \binomial{n}{2}
      \smash{c_1 \!\bigl( \OO_{\PP(M^*)}(1) \!\bigr)}^2
      - n \, c_1(p^*E) \; \smash{c_1 \!\bigl( \OO_{\PP(M^*)}(1) \!\bigr)}
      + c_1(p^* E)^2 - c_2(p^* E) 
  \intertext{and}
    \deg q_*[R(q)]
    &=
      \int_X \bigl( c_2(\Omega_X)
      + c_1(\Omega_X) \; c_1(E) + c_1(E)^2 - c_2(E) \!\bigr) \; c_{n-2}(E) \\[-2pt]
    &\phantom{WWWW} + \bigl(\! (n-1) \, c_1(\Omega_X) + n \, c_1(E) \!\bigr)
      \; c_{n-1}(E)  \, . \qedhere
  \end{align*}
\end{example}

\section{Discriminant Locus of a Vector Bundle}

\noindent
This section determines the degree of the discriminant of a vector bundle.  As
in the first section, $X$ is an $n$-dimensional smooth complex projective
variety $X$ and $E$ is a globally-generated vector bundle on $X$ of rank
$e \leqslant n$.  Set $V_E \coloneqq H^{0}(X, E)$, let $M_E$ be the kernel of
the evaluation map
$\operatorname{ev}_{E} \colon V_E \otimes_{\CC} \OO_X \to E$, and write
$q_E \colon \PP(M_{E}^{*}) \to \PP(V_E^*)$ for composition of the canonical
inclusion $\PP(M_{E}^{*}) \to X \times \PP(V_E^*)$ and the projection
$X \times \PP(V_E^*) \to \PP(V_E^*)$ onto the second factor.

The \emph{discriminant locus} $\Delta(E)$ of the vector bundle $E$ is the
reduced scheme structure on the image of the ramification locus $R(q_E)$ under
the map $q_E$.  A section $s$ in $V_E^*$ is \emph{nonsingular} if its zero
scheme $\operatorname{Zeroes}(s)$ is nonsingular and has codimension $e$ in
$\PP(V_E^*)$; otherwise it is \emph{singular}.  With this terminology, one
verifies that
\[
  \Delta(E) \coloneqq \bigl\{ [s] \in \PP(V_{E}^{*}) \mathrel{\big|} \text{the
    section $s$ is singular} \bigr\} \, .
\]
The \emph{defect} of the vector bundle $E$ is the integer given by
$\defect(E) \coloneqq \codim \Delta(E) - 1$, the \emph{expected degree} of
$\Delta(E)$ is $\delta(E) \coloneqq \deg (q_E)_* [R(q_E)]$, and the
\emph{coefficient} of $R(q_E)$ in $[R(q_E) _{\mathrm{red}}]$ is the unique
positive integer $m_E$ such that $[R(q_E)] = m_E \, [R(q_E)_{\mathrm{red}}]$
in the Chow ring $A^{\bullet} \!\bigl( \PP(M_E^*) \!\bigr)$.

The significance of these numerical invariants becomes clear with an
additional hypothesis.

\begin{remark}
  \label{r:deg}
  Assume that the ramification locus $R(q_E)$ is irreducible and has dimension
  $r-1$ (or equivalently codimension $n-e+1$).  It follows that the
  discriminant locus $\Delta(E)$ is also irreducible.  For the function fields
  $\CC \bigl( R(q_E) \!\bigr)$ and $\CC \bigl( \Delta(E) \!\bigr)$ of the
  reduced schemes $R(q_E)_{\text{red}}$ and $\Delta(E)$, the degree of the
  field extension is
  $\bigl[ \CC \bigl( R(q_E) \!\bigr) \mathbin{:} \CC \bigl( \Delta(E) \!\bigr)
  \!\bigr]$ and the degree of $R(q_E)$ over $\Delta(E)$ is
  \[
    \deg R(q_E)/\Delta(E) \coloneqq
    \begin{cases}
      \bigl[ \CC \bigl( R(q_E) \!\bigr) \mathbin{:} \CC \bigl( \Delta(E)
      \!\bigr) \!\bigr]
      & \text{if $\dim \Delta(E) = r-1$} \\[2pt]
      0
      & \text{if $\dim \Delta(E) < r-1$.} 
    \end{cases}
  \]
  The definition of the pushforward of a cycle gives
  $(q_E)_* [R(q_E)] = m_E \, \bigl( \deg R(q_E)/\Delta(E) \!\bigr) \,
  [\Delta(E)]$; see \cite{Fulton}*{Section~1.4}.  Hence, we have
  $\defect(X) > 0$ if and only if $\delta(E) = 0$.  Moreover, when $R(q_E)$ is
  integral and birational to $\Delta(E)$, we have
  $\deg \Delta(E) = \delta(E)$.
\end{remark}

Although the next result is likely known to experts, we could not find an
adequate reference.

\begin{theorem}
  \label{t:ample}
  Assume that $X$ an $n$-dimensional smooth projective variety $X$ and let $E$
  be a very ample vector bundle on $X$ of rank $e \leqslant n$. Let
  $\pi \colon \PP(E) \to X$ be the projective bundle associated to $E$ and let
  $L \coloneqq \OO_{\PP(E)}(1)$ be the tautological line bundle on the
  projectivization $\PP(E)$.
  \begin{compactitem}
  \item The discriminant locus $\Delta(E)$ of the vector bundle $E$ is
    isomorphic to the discriminant locus $\Delta(L)$ of the line bundle $L$.
    In particular, $\Delta(E)$ is irreducible.
  \item When the discriminant locus $\Delta(E)$ is a hypersurface, the reduced
    scheme $R(q_E)_{\mathrm{red}}$ is birational to $\Delta(E)$ and
    $\deg \Delta(E) = m_E \, \bigl\{\! c (p^* \Omega_X) \; s \bigl(p^* E^*
    \otimes \OO_{\PP(M_E^*)}(-1) \!\bigr) \!\bigr\}_{\!n-e+1}$.
  \end{compactitem}
\end{theorem}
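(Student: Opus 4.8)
The plan is to establish the two bullet points in sequence, using the projective-bundle formalism to reduce the rank-$e$ case to the line-bundle case $L = \OO_{\PP(E)}(1)$, and then to invoke Proposition~\ref{p:deg} together with Remark~\ref{r:deg}.

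For the first bullet, the key observation is that sections of $E$ on $X$ correspond bijectively to sections of $L$ on $\PP(E)$, since $H^0(X, E) \cong H^0\bigl(\PP(E), \OO_{\PP(E)}(1)\bigr)$ under the projection $\pi$. This identifies the two parameter spaces $\PP(V_E^*)$ and $\PP(V_L^*)$. The heart of the argument is to show that, under this identification, a section $s$ of $E$ is singular (in the sense that $\operatorname{Zeroes}(s) \subseteq X$ fails to be smooth of codimension $e$) if and only if the corresponding section $\tilde s$ of $L$ is singular (its zero divisor in $\PP(E)$ is singular). First I would note that for a section $s$ of $E$, the zero scheme of the corresponding $\tilde s$ is exactly $\PP(\operatorname{Coker}(s)^*) \subseteq \PP(E)$, i.e.\ the preimage structure lying over $\operatorname{Zeroes}(s)$; away from the zero locus of $s$ this zero divisor maps isomorphically nowhere, but over the vanishing locus it records the full projectivized cokernel. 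The very ampleness of $E$ (equivalently, $L$ being very ample on $\PP(E)$) guarantees that the relevant sections are plentiful and that the zero divisor of $\tilde s$ is a genuine hypersurface. I would then check smoothness point-by-point: at a point of $\operatorname{Zeroes}(\tilde s)$ lying over a point $x \in X$ where $s$ vanishes, one computes the differential of $\tilde s$ and compares its rank to the differential of $s$ at $x$, using the relative Euler sequence for $\pi$. The upshot is that $\tilde s$ is singular at a point over $x$ precisely when $s$ is singular at $x$, so the two discriminant loci coincide as subsets of the common parameter space; taking reduced structures gives the asserted isomorphism $\Delta(E) \cong \Delta(L)$. Irreducibility of $\Delta(E)$ then follows from irreducibility of $\Delta(L)$ for the very ample line bundle $L$, which is the classical statement for discriminants of line bundles.

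For the second bullet, I would assume $\Delta(E)$ is a hypersurface, so $\dim \Delta(E) = r - 1$. By Remark~\ref{r:deg}, provided $R(q_E)$ is irreducible of the expected dimension $r-1$, we have $(q_E)_*[R(q_E)] = m_E \bigl(\deg R(q_E)/\Delta(E)\bigr)[\Delta(E)]$, and hence $\deg \Delta(E) = m_E \cdot \delta(E)$ exactly when $R(q_E)_{\mathrm{red}}$ is birational to $\Delta(E)$, i.e.\ when $\deg R(q_E)/\Delta(E) = 1$. So the crux is to prove that $q_E$ restricts to a birational (degree-one) map from $R(q_E)_{\mathrm{red}}$ onto $\Delta(E)$. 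This amounts to showing that a generic singular section $s \in \Delta(E)$ has a \emph{unique} singular point of its zero scheme, so that the fibre of $q_E$ over $[s]$ meets $R(q_E)$ in a single reduced point. Here I would again pass through $L$: for a very ample line bundle, a generic element of the discriminant has exactly one ordinary double point, by a standard Bertini-type dimension count on the universal singular locus. Transporting this back through the isomorphism $\Delta(E) \cong \Delta(L)$ and the point-wise correspondence of singularities established in the first bullet shows the generic fibre of $R(q_E) \to \Delta(E)$ is a single point, giving birationality. Once birationality is in hand, the formula $\deg \Delta(E) = m_E \, \bigl\{ c(p^*\Omega_X)\, s\bigl(p^* E^* \otimes \OO_{\PP(M_E^*)}(-1)\bigr) \bigr\}_{n-e+1}$ follows immediately by substituting the ramification-class formula of Proposition~\ref{p:deg} for $\delta(E) = \deg (q_E)_*[R(q_E)]$.

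The main obstacle I anticipate is the careful verification in the first bullet that singularity of $s$ is \emph{equivalent} to singularity of the associated divisor of $\tilde s$ in $\PP(E)$, including a correct comparison of the two scheme structures via the relative Euler sequence; the subtle direction is ensuring that a singularity of $\tilde s$ cannot arise spuriously from the fibre directions of $\pi$ when $s$ itself is nonsingular. A secondary technical point is the generic-uniqueness-of-singularity argument underpinning birationality, where the very ampleness and (implicitly) the $1$-jet spanning hypotheses invoked elsewhere in the paper ensure the universal singular locus has the expected dimension; I would lean on the line-bundle case and the classical theory of dual varieties to avoid redoing this dimension count from scratch.
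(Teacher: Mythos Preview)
Your proposal is correct and follows essentially the same strategy as the paper: both identify $\PP(V_E^*)$ with $\PP(V_L^*)$ via $H^0(X,E)\cong H^0(\PP(E),L)$, verify by a local computation that $s$ is singular iff $\tilde s$ is singular (the paper trivializes $E$ locally and writes $\tilde s = \sum f_i x_i$, citing the derivative calculation in \cite{A}; your relative-Euler-sequence route amounts to the same thing), and then deduce birationality of $R(q_E)_{\mathrm{red}}\to\Delta(E)$ by passing through the line-bundle case. The only minor difference is in step~4: the paper first observes that the same local calculation makes $R(q_L)_{\mathrm{red}}$ birational to $R(q_E)_{\mathrm{red}}$ via $\pi\times\varphi$, and then cites \cite{GKZ}*{Proposition~3.2} for $R(q_L)_{\mathrm{red}}\to\Delta(L)$ birational, whereas you argue directly that a generic singular $\tilde s$ has a unique singular point and transport that back. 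Both routes invoke the classical reflexivity/biduality result for very ample line bundles; no $1$-jet spanning is needed here (that hypothesis enters only in Corollary~\ref{c:1span} to force $m_E=1$).
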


\begin{proof}
  The canonical isomorphism
  $\smash{V_E = H^0(X,E) \xrightarrow{\;\cong\;} H^0 \!\bigl( \PP(E), L \bigr)
    = V_L}$ induces an isomorphism
  $\varphi \colon \PP(V_L^*) \to \PP (V_E^*)$. It is enough to show that the
  restriction of $\varphi$ to $\Delta(L)$ yields an isomorphism from
  $\Delta(L)$ to $\Delta(E)$.  To accomplish this, it suffices to prove that a
  section $s$ in $V_E^*$ is singular if and only if the corresponding section
  $\tilde{s}$ in $V_L^*$ is singular.  As this assertion is local, we may
  assume that $X$ is affine and $E \cong \bigoplus_{i=1}^{e} \OO_X$. Hence,
  there exist $f_1, f_2, \dotsc, f_e \in H^0(X, \OO_X)$ such that
  $s = (f_1, f_2, \dotsc, f_e)$ and
  $\tilde{s} = f_1 \, x_1 + f_2 \, x_2 + \dotsb + f_e \, x_e$ where
  $x_1, x_2, \dotsc, x_e$ are homogeneous coordinates of
  $\PP^{e-1} = \PP(V_E^*)$.  The assertion now follows from a local
  calculation of derivatives as appears in~\cite{A}*{Subsection~3.2}.
  
  The same calculation shows that restriction of the map
  $\pi \times \varphi \colon \PP(E) \times \PP(V_L^*) \to X \times \PP(V_E^*)$
  to $R(q_L)_{\mathrm{red}}$ is a birational map from $R(q_L)_{\text{red}}$ to
  $R(q_E)_{\text{red}}$.  When $\Delta(L)$ is a hypersurface, Proposition~3.2
  in \cite{GKZ} demonstrates that reduced scheme $R(q_L)_{\text{red}}$ is
  birational to discriminant locus $\Delta(L)$.  It follows that the reduced
  scheme $R(q_E)_{\mathrm{red}}$ is birational to discriminant locus
  $\Delta(E)$. Finally, the degree formula is an immediate consequence of
  Remark~\ref{r:deg}.
\end{proof}

To prove that the ramification locus is reduced, we first record a general
observation about degeneracy loci.  Consider three vector bundles $A$, $B$,
and $C$ on a smooth projective variety $X$ together with an injective vector
bundle morphism $\mu \colon A \otimes B^* \to C$. Let
$\varpi \colon \PP(C) \to X$ be the projective bundle associated to $C$, let
$\eta \colon \varpi^* C \to \OO_{\PP(C)}(1)$ be the natural surjective
morphism, and let
$\tilde{\mu} \colon \varpi^* (A \otimes B^*) \to \OO_{\PP(C)}(1)$ be the
composition of $\mu$ with $\eta$.  Using tensor-hom adjunction, the map
$\tilde{\mu}$ corresponds to the morphism
$\mu' \colon \varpi^*A \to \varpi^* B \otimes \OO_{\PP(C)}(1)$.

\begin{lemma}
  \label{l:CM}
  For any nonnegative integer $k$, the $k$-th degeneracy locus
  $D_k(\mu') \coloneqq \operatorname{Zeroes}(\bigwedge^{k+1} \mu')$ is reduced
  and Cohen--Macaulay of codimension
  $\bigl( \rank(A) - k \bigr) \bigl(\rank(B) - k \bigr)$.
\end{lemma}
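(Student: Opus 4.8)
The plan is to exhibit a single degeneracy locus that is universally the expected-codimension determinantal locus of a morphism between bundles, and then invoke the standard Cohen--Macaulay and genericity theory for such loci. The key structural observation is that $D_k(\mu')$ can be realized as a degeneracy locus for a morphism between \emph{trivial-rank-related} bundles on the smooth space $\PP(C)$, so the general results on determinantal schemes associated to generic maps apply. First I would recall the precise statement we want to use: if $\psi \colon F \to G$ is a morphism of vector bundles on a Cohen--Macaulay scheme $Y$, then the locus where $\operatorname{rank}(\psi) \leqslant k$, cut out by the $(k{+}1) \times (k{+}1)$ minors, has codimension at most $(\rank F - k)(\rank G - k)$, and when equality holds this scheme is Cohen--Macaulay; see \cite{Fulton}*{Theorem~14.4(c)} or the Eagon--Northcott resolution. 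Here $Y = \PP(C)$ is smooth (hence Cohen--Macaulay), $F = \varpi^* A$ with $\rank F = \rank A$, and $G = \varpi^* B \otimes \OO_{\PP(C)}(1)$ with $\rank G = \rank B$, so the expected codimension is exactly $(\rank A - k)(\rank B - k)$, matching the claim.

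The substance of the lemma is therefore twofold: one must (i) verify that $D_k(\mu')$ attains the expected codimension, and (ii) upgrade the Cohen--Macaulay conclusion to reducedness. For (i), the natural route is to show that $\mu'$ is \emph{generic} in the relevant sense on $\PP(C)$. The hypothesis that $\mu \colon A \otimes B^* \to C$ is an injective bundle map is what feeds into this: passing to $\PP(C)$ and composing with $\eta$ spreads out the image of $\mu$ across the fibres, so that over a general point of $\PP(C)$ the map $\mu'_x \colon A_x \to B_x$ is as generic as possible subject to the constraint imposed by $\mu$. The cleanest formulation is to identify $\mu'$ with the universal morphism whose degeneracy loci are the classical generic determinantal varieties, and then transfer the known codimension statement. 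I would make this identification explicit fibrewise and check that the tautological quotient $\eta$ contributes exactly the right twist so that no fibre degenerates unexpectedly.

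For (ii), reducedness, the standard mechanism is genericity: the Eagon--Northcott complex resolving the ideal of maximal minors shows that when the determinantal scheme has expected codimension it is Cohen--Macaulay, and to conclude it is reduced one shows it is generically reduced (i.e. reduced at the generic point of each component) and then invokes that a Cohen--Macaulay scheme which is generically reduced has no embedded components and is therefore reduced everywhere ($R_0 + S_1 \Rightarrow$ reduced). Generic reducedness follows from checking smoothness of $D_k(\mu')$ along a dense open subset, which again reduces to the genericity of $\mu'$ established for (i): at a general point of $D_k(\mu')$ the map $\mu'$ has exactly rank $k$ and its minors cut out a smooth locus, mirroring the smoothness of the generic determinantal variety away from deeper degeneracy strata.

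The main obstacle I expect is step (i), specifically verifying that the composed map $\mu'$ really does attain the expected codimension on $\PP(C)$ rather than merely the a priori upper bound. This is where the injectivity hypothesis on $\mu$ must be used in an essential way, and where one must rule out the possibility that the constraint ``$\mu'$ factors through $\mu$'' forces the degeneracy loci to be larger than generic. I would address this by reducing to the universal case over a Grassmannian or affine space of matrices, where the determinantal variety has exactly the expected codimension and is reduced and Cohen--Macaulay, and then argue that the map classifying $\mu'$ meets these strata transversally---or, more directly, that $\PP(C)$ together with $\eta$ provides enough fibrewise freedom that $D_k(\mu')$ is the pullback of the generic determinantal variety under a flat (or at least codimension-preserving) morphism. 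Once the expected codimension is secured, both the Cohen--Macaulayness and the reducedness follow formally from the Eagon--Northcott theory.
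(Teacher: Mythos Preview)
Your strategy is the right one and coincides with the paper's: reduce to the generic determinantal variety and pull back along a flat map. But you stop precisely where the content lies. You say the injectivity of $\mu$ ``must be used in an essential way'' and propose to show that $D_k(\mu')$ is a flat pullback of the generic determinantal locus, without actually doing so. The paper carries this out concretely, and the argument is short enough that it is worth knowing.

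Since the statement is local on $X$, one may assume $X$ affine and $A,B,C$ trivial, say with fibres $U,V,W$. The injectivity of $\mu$ then gives a splitting $W \cong (U\otimes V^*)\oplus W'$, so $\PP(C) = X\times\PP(W)$ contains $\PP(Q) = X\times\PP(W')$. On the tautological bundle $Y \coloneqq \PP(A\otimes B^*) = X\times\PP(U\otimes V^*)$ the analogous map $\theta'\colon \rho^*A\to\rho^*B\otimes\OO_Y(1)$ has $D_k(\theta') = X\times D_k(U,V)$, which is the classical generic determinantal variety and hence reduced, Cohen--Macaulay, of the expected codimension. The projection away from $\PP(Q)$ gives a trivial affine-bundle map $\psi\colon \PP(C)\setminus\PP(Q)\to Y$ with $\mu' = \psi^*\theta'$, so $D_k(\mu')$ is the cone in $\PP(W)$ over $D_k(U,V)$ with vertex $\PP(W')$, crossed with $X$. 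Reducedness and Cohen--Macaulayness of the cone follow directly from those of $D_k(U,V)$, and the codimension is preserved. This replaces both your transversality argument and your $R_0+S_1$ step with a single explicit geometric identification.
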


\begin{proof}
  As the assertion is local, we may assume that $X$ is affine and the three
  vector bundles are trivial. Let $U$, $V$, and $W$ be vector spaces such that
  $A = U \otimes_{\CC} \OO_X$, $B = V \otimes_{\CC} \OO_X$, and
  $C = W \otimes_{\CC} \OO_X$.  For each nonnegative integer $k$, let
  $D_k(U,V)$ be the locus of points in
  $\PP(U \otimes_{\CC} V^*) = \PP\bigl( \operatorname{Hom}_{\CC}(U,V)
  \!\bigr)$ whose corresponding linear transformations from $U$ to $V$ have
  rank at most $k$.

  Consider the projective bundle $\rho \colon \PP(A \otimes B^*) \to X$
  associated to $A \otimes B^*$. On the projectivization
  $Y \coloneqq \PP(A \otimes B^*)$, the surjective morphism
  $\theta \colon \rho^*(A \otimes B^*) \to \OO_Y(1)$ corresponds to the
  morphism $\theta' \colon \rho^* A \to \rho^*B \otimes \OO_Y(1)$ whose
  $k$-th degeneracy locus $D_k(\theta')$ is $X \times D_k(U,V)$.  In
  particular, $D_k(\theta')$ is reduced and Cohen--Macaulay of codimension
  $\bigl( \rank(A) - k \bigr) \bigl(\rank(B) - k \bigr)$.

  Let $Q$ be the cokernel of the map $\mu \colon A \otimes B^* \to C$.  It
  follows that $\PP(Q)$ is a subbundle of $\PP(C)$.  Let
  $\psi \colon \PP(C) - \PP(Q) \to Y$ be the associated trivial affine bundle
  over $X$.  Since the map
  $\mu' \colon \varpi^*A \to \varpi^* B \otimes \OO_{\PP(C)}(1)$ is nonzero
  away from $\PP(Q)$, we have the commutative diagram
  \[
    \begin{tikzcd}
      \PP(C) - \PP(Q) \ar[rr, "\psi"] \ar[dr, "\varpi\;\;\;" left] &
      & Y \ar[dl, "\rho"] \\
      & X & 
    \end{tikzcd}
  \]
  with the property that $\mu' = \psi^*(\theta')$.  Hence, the $k$-th
  degeneracy locus $D_k(\mu')$ is the ``cone'' over $D_k(\theta')$ in $\PP(C)$
  with vertex $\PP(Q)$; it is the product of $X$ and the cone over $D_k(U,V)$
  in $\PP(W)$ with vertex $\PP \bigl( W / (U \otimes V^*) \!\bigr)$.  We
  conclude that $D_k(\mu')$ is also reduced and Cohen--Macaulay of codimension
  $\bigl( \rank(A) - k \bigr) \bigl(\rank(B) - k \bigr)$.
\end{proof}

To ensure that the ramification locus $R(q_E)$ is reduced, we rely on a
stronger hypothesis than $E$ being very ample. To define this condition, we
use the first jet bundle $J_1(E)$ that parametrizes the first-order Taylor
expansions of the sections of $E$.  More precisely, let $\mathcal{J}$ be the
ideal sheaf defining the diagonal embedding $X \hookrightarrow X \times X$ and
let
$\operatorname{pr}_1, \operatorname{pr}_2 \colon
\operatorname{Zeroes}(\mathcal{J}^2) \to X$ be the restrictions of the
projections $X \times X \to X$ to the closed subscheme
$\operatorname{Zeroes}(\mathcal{J}^2) \subset X \times X$. The \emph{first jet
  bundle} is
$J_1(E) \coloneqq (\operatorname{pr}_1)_* \operatorname{pr}_2^* E$; this is
also called the bundle of principal parts in \cite{Fulton}*{Example~2.5.6}.
The vector bundle $J_1(E)$ has rank $n+1$ and sits in the short exact sequence
\[
  \begin{tikzcd}
    0 \ar[r] & \Omega_X  \otimes E \ar[r] & J_1(E) \ar[r] 
    & E \ar[r] & 0 \, .
  \end{tikzcd}
\]
The vector bundle $E$ is \emph{$1$-jet spanned} if the evaluation map
$V_E \otimes_{\CC} \OO_{X} \to J_1(E)$ is surjective; see
\cite{BDS}*{Subsection~1.3}.  With this concept, we have the following
corollary.
 
\begin{corollary}
  \label{c:1span}
  Let $X$ be an $n$-dimensional smooth projective variety and let $E$ be a
  very ample vector bundle of rank $e \leqslant n$.  Assuming that $E$ is
  $1$-jet spanned, the ramification locus $R(q_E)$ is reduced and
  Cohen-Macaulay of codimension $n-e+1$, so $\Delta(E) =
  (q_E)_*[R(q_E)]$. Furthermore, the discriminant locus $\Delta(E)$ is a
  hypersurface if and only if we have $\delta(E) >0$.  When $\Delta(E)$ is a
  hypersurface, the degree of discriminant locus is
  \[
    \deg \Delta(E) = \bigl\{\! c (p^* \Omega_X) \;
    s \bigl(p^* E^* \otimes \OO_{\PP(M^*)}(-1) \!\bigr) \!\bigr\}_{\!n-e+1}
    \, .
  \]
\end{corollary}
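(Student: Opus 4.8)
The plan is to realize the ramification locus $R(q_E)$ as a degeneracy locus of exactly the shape treated in Lemma~\ref{l:CM}, and then to read off every assertion from that lemma together with Proposition~\ref{p:deg}, Remark~\ref{r:deg}, and Theorem~\ref{t:ample}. The first step is to revisit the snake-lemma argument in the proof of Proposition~\ref{p:deg}, which shows that the cokernel of the differential $dq$ is isomorphic to the cokernel of the composite $\theta \circ \delta \colon p^* E^* \otimes \OO_{\PP(M^*)}(-1) \to p^* \Omega_X$. Comparing the ranks of these two cokernels shows that $dq$ drops to rank at most $r-1$ precisely where $\theta \circ \delta$ drops to rank at most $e-1$; since Fitting ideals depend only on the cokernel, this is an equality of subschemes, so $R(q_E) = D_{e-1}(\theta \circ \delta)$ with its natural determinantal scheme structure.

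Next I would use the $1$-jet spanned hypothesis to recast $\theta \circ \delta$ in the form $\mu'$ from Lemma~\ref{l:CM}. Writing $M_E^{(1)}$ for the kernel of the surjective evaluation $V_E \otimes_{\CC} \OO_X \to J_1(E)$, the factorization of $\operatorname{ev}_E$ through $J_1(E)$ produces a short exact sequence $0 \to M_E^{(1)} \to M_E \to \Omega_X \otimes E \to 0$, where surjectivity onto $\Omega_X \otimes E \cong \ker(J_1(E) \to E)$ is exactly the $1$-jet spanning. Dualizing yields an injective bundle map $\mu \colon E^* \otimes T_X \to M_E^*$, so I would apply Lemma~\ref{l:CM} with $A = E^*$, $B = \Omega_X$, and $C = M_E^*$. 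The associated map $\mu' \colon p^* E^* \to p^* \Omega_X \otimes \OO_{\PP(M^*)}(1)$ is then, up to the twist by $\OO_{\PP(M^*)}(1)$, the differential-of-a-section map $\theta \circ \delta$; this identification is the local derivative computation recorded in \cite{A}*{Subsection~3.2}. Because tensoring by a line bundle does not change where a map drops rank, we obtain $R(q_E) = D_{e-1}(\mu')$, and Lemma~\ref{l:CM} with $k = e-1$ immediately gives that $R(q_E)$ is reduced and Cohen--Macaulay of codimension $\bigl( e - (e-1) \bigr)\bigl( n - (e-1) \bigr) = n - e + 1$.

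With reducedness secured, the remaining assertions are bookkeeping. Reducedness forces $m_E = 1$, so Remark~\ref{r:deg} gives both the cycle relation $(q_E)_*[R(q_E)] = \bigl( \deg R(q_E)/\Delta(E) \bigr)\,[\Delta(E)]$ and the equivalence $\defect(E) > 0 \iff \delta(E) = 0$; the latter is precisely the statement that $\Delta(E)$ is a hypersurface if and only if $\delta(E) > 0$. When $\Delta(E)$ is a hypersurface, the birationality of $R(q_E)_{\mathrm{red}}$ onto $\Delta(E)$ established in Theorem~\ref{t:ample} (this is where very ampleness enters) gives $\deg R(q_E)/\Delta(E) = 1$, whence $\Delta(E) = (q_E)_*[R(q_E)]$. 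Finally, since $R(q_E)$ has the expected codimension $n-e+1$, Proposition~\ref{p:deg} evaluates $[R(q_E)]$, and setting $m_E = 1$ in the degree formula of Theorem~\ref{t:ample} produces $\deg \Delta(E) = \bigl\{ c(p^* \Omega_X) \, s\bigl( p^* E^* \otimes \OO_{\PP(M^*)}(-1) \bigr) \bigr\}_{n-e+1}$.

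I expect the identification of $\mu'$ with the differential $\theta \circ \delta$ to be the main obstacle. Lemma~\ref{l:CM} manufactures $\mu'$ abstractly from the jet-theoretic surjection $M_E \twoheadrightarrow \Omega_X \otimes E$, whereas $R(q_E)$ is defined through $dq$, and one must check that these agree as maps, hence as determinantal schemes, rather than merely verifying that both carry the expected codimension. The cleanest route is the fibrewise calculation: at a point $(x, [s])$ with $s(x) = 0$, both maps send $E_x^*$ to $\Omega_{X,x}$ via the first-order part of $s$ at $x$, which is exactly the content of the local computation cited above. Everything else---the rank comparison, the passage through Fitting ideals, and the numerical consequences---is routine once this compatibility is in place.
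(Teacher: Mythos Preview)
Your proposal is correct and follows essentially the same route as the paper: construct the surjection $M_E \twoheadrightarrow \Omega_X \otimes E$ from the $1$-jet spanned hypothesis via the snake lemma applied to the diagram linking the defining sequence of $M_E$ with the jet sequence for $J_1(E)$, dualize to the injective $\mu \colon E^* \otimes T_X \hookrightarrow M_E^*$, and invoke Lemma~\ref{l:CM} together with Theorem~\ref{t:ample} and Remark~\ref{r:deg}. The paper's proof is terser---it simply asserts that producing this injective map suffices---whereas you make explicit the two points the paper leaves to the reader: first, that $R(q_E)$ coincides as a scheme with $D_{e-1}(\theta\circ\delta)$ because the cokernels of $dq$ and $\theta\circ\delta$ agree (so their Fitting ideals do); and second, that the abstract $\mu'$ manufactured by Lemma~\ref{l:CM} really is $\theta\circ\delta$ up to a line-bundle twist, which you correctly flag as the substantive check and handle by the fibrewise derivative computation. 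Your added care here is warranted and not a deviation in strategy.
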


\begin{proof}
  By Theorem~\ref{t:ample} and Lemma~\ref{l:CM}, it suffices to show the
  existence of an injective vector bundle morphism from
  $E^* \otimes (\Omega_X)^* $ to $M_E^*$ or equivalently a surjective map from
  $M_E$ to $E \otimes \Omega_X$.  To establish this, we combine the defining
  short exact sequence for $M_E$ with the canonical short exact sequence for
  $J_1(E)$ to obtain the following commutative diagram with exact rows:
  \[
    \begin{tikzcd}[row sep = 1.0em]
      0 \ar[r] & M_E \ar[r] \ar[d] & V_C \otimes_{\CC} \OO_X \ar[r] \ar[d]
      & E \ar[r] \ar[equal]{d} & 0 \, \phantom{.} \\ 
      0 \ar[r] & \Omega_X \otimes E \ar[r] & J_1(E) \ar[r] 
      & E \ar[r] & 0 \, .
    \end{tikzcd}
  \]
  Since $E$ is $1$-jet spanned, the second vertical map is surjective.  Hence,
  the snake lemma implies that the first vertical map is also surjective. 
\end{proof}

\begin{remark}
  Remark~0.3.2 in \cite{BFS} establishes that, for any very ample line bundle
  $L$ on an $m$-dimensional smooth projective variety $Y$, we have
  $\defect(L) > 0$ if and only if $c_m \bigl( J_1(L) \!\bigr) = 0$.  When the
  discriminant locus $\Delta(L)$ is a hypersurface, this remark also shows
  that $\deg \Delta(L) = \int_Y c_m \bigl( J_1(L) \!\bigr)$.

  Given an $n$-dimensional smooth projective variety $X$ and a very ample
  vector bundle $E$ on $X$ of rank $e \leqslant n$, Lanteri and Mu\~{n}oz
  compute the top Chern class of the first jet bundle of the line bundle
  $L \coloneqq \OO_{\PP(E)}(1)$.  More precisely, when $Y = \PP(E)$,
  Proposition~1.1 in \cite{LM} expresses $c_{n+e-1} \!\bigl( J_1(L) \!\bigr)$
  as a polynomial in the Chern classes of $E$ and the tangent bundle $T_X$.
  Under the assumption that the vector bundle $E$ is $1$-jet spanned,
  Corollary~\ref{c:1span} provides a different formula for the degree of
  $\Delta(E)$.
\end{remark}


\begin{example}
  Let $L$ be a very ample line bundle on a smooth projective variety $X$. The
  line bundle $L$ is $1$-jet spanned; see \cite{BDS}*{Subsection~1.3}.  When
  the discriminant locus $\Delta(L)$ is a hypersurface, Example~\ref{e:e=1}
  and Corollary~\ref{c:1span} show that
  \[
    \deg \Delta(L) 
    = \sum_{i=0}^n (i+1) \, \int_X c_{n-i}(\Omega_X) \; c_1(L)^i
    \, .
  \]
  Therefore, we recover the degree of the classical discriminant; see
  \cite{GKZ}*{Example~3.12}.
\end{example}

Our second corollary focuses on vector bundles whose rank equals the dimension
of their underlying variety.  Part of this result provides an alternative
proof for Proposition~2.2 in \cite{LM}.

\begin{corollary}
  \label{c:n=e}
  Let $X$ be a $n$-dimensional smooth complex projective variety. For any very
  ample vector bundle $E$ of rank $n$ on $X$, the discriminant locus
  $\Delta(E)$ is irreducible and $\defect(E) > 0$ if and only if $X = \PP^n$
  and $E = \bigoplus_{i=1}^{n} \OO_{\PP^n}(1)$.  Assuming that $E$ is $1$-jet
  spanned and
  $(X,E) \neq \bigl( \PP^n, \bigoplus_{i=1}^n \OO_{\PP^n}(1) \bigr)$, the
  discriminant locus $\Delta (E)$ is an irreducible hypersurface of degree
  \[
    \int_X \bigl( c_1(\Omega_X) + c_1(E) \!\bigr) \, c_{n-1}(E) + n \, c_n(E)
    \, .
  \]
\end{corollary}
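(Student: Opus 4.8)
The plan is to transport every assertion to the tautological line bundle $L \coloneqq \OO_{\PP(E)}(1)$ on the $(2n-1)$-dimensional smooth variety $Y \coloneqq \PP(E)$ and then to read the two claims off the explicit expected degree from Example~\ref{e:n=e}. First I would apply Theorem~\ref{t:ample}: the isomorphism $\Delta(E) \cong \Delta(L)$ yields irreducibility at once and gives $\defect(E) = \defect(L)$, so the corollary becomes a statement about the dual variety of the embedding $Y \hookrightarrow \PP(V_L^*)$ defined by the very ample line bundle $L$.

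For the degree I would feed the hypothesis $e = n$ into Corollary~\ref{c:1span}: when $E$ is $1$-jet spanned the ramification locus $R(q_E)$ is reduced of codimension $1$, the discriminant $\Delta(E)$ is a hypersurface precisely when $\delta(E) > 0$, and in that case $\deg \Delta(E) = \delta(E)$. Specializing the Segre-class expression to $e = n$ is exactly Example~\ref{e:n=e} and gives $\delta(E) = \int_X (c_1(\Omega_X) + c_1(E))\, c_{n-1}(E) + n\, c_n(E)$. Hence the degree formula follows as soon as every non-exceptional pair satisfies $\delta(E) > 0$, which is the content of the defect dichotomy.

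That dichotomy is the heart of the matter. I would recast $\defect(L) > 0$ as the vanishing $\int_Y c_{2n-1}(J_1(L)) = 0$ (the criterion in the Remark after Corollary~\ref{c:1span}) and identify this top Chern number with $\delta(E)$, either via the relative Euler sequence on $\PP(E)$ or via the computation in \cite{LM}; this reduces the problem to classifying very ample rank-$n$ bundles with $\delta(E) = 0$. The easy direction is a direct computation on $\PP^n$: for $E = \bigoplus_{i=1}^n \OO_{\PP^n}(1)$ one has $c_1(\Omega_X) + c_1(E) = -h$, $c_{n-1}(E) = n\, h^{n-1}$, and $c_n(E) = h^n$, so $\delta(E) = -n + n = 0$; equivalently $Y = \PP^n \times \PP^{n-1}$ is the Segre variety, whose dual defect is $1$.

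The hard part will be the converse, that $\delta(E) = 0$ forces $(X,E) = (\PP^n, \bigoplus_{i=1}^n \OO_{\PP^n}(1))$, and I expect this to need genuine geometry rather than the bare vanishing. From the relative canonical bundle of $\PP(E)$ one computes $K_Y + n L = \pi^*(c_1(\Omega_X) + c_1(E))$, a class trivial on every fiber of $\pi \colon \PP(E) \to X$; reading this against the structure theory of varieties with positive dual defect, for which a general contact locus is a linear subspace of dimension $\defect(L)$ carrying a trivial adjoint class, constrains $(X, c_1(\Omega_X) + c_1(E))$ so tightly that very ampleness of $E$ leaves only the exceptional pair. I would either carry out this adjunction analysis in detail or quote the classification directly. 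Assembling the three steps gives irreducibility, the defect dichotomy, and --- in the $1$-jet spanned non-exceptional case, where $\delta(E) > 0$ forces $\Delta(E)$ to be a hypersurface --- the stated degree.
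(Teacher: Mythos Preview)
Your overall framework agrees with the paper: irreducibility via Theorem~\ref{t:ample}, the degree formula via Corollary~\ref{c:1span} specialized through Example~\ref{e:n=e}, and the reduction of everything to the dichotomy $\defect(E)>0 \Longleftrightarrow \delta(E)=0$. The easy direction (the exceptional pair has $\delta(E)=0$) is handled identically.

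Where you diverge is the hard direction, and here the paper's argument is both different and considerably shorter than what you outline. You propose to pass to $Y=\PP(E)$, invoke Ein-type structure theory for varieties with positive dual defect (contact loci are linear spaces on which a suitable adjoint class vanishes), and then unwind what the relation $K_Y+nL=\pi^*(K_X+\det E)$ forces. This can be made to work, but it is heavy, and the sketch you give stops well short of pinning down the exceptional pair; the alternative you mention, ``quote the classification directly,'' would mean citing \cite{LM}, which is circular since the corollary is presented as an alternative proof of their Proposition~2.2. The paper instead stays entirely on $X$ and argues by positivity: Bloch--Gieseker gives $\int_X c_n(E)>0$, so it suffices to show $\int_X (K_X+\det E)\,c_{n-1}(E)\geqslant 0$; by the adjunction theorem of Ye--Zhang \cite{YZ}, $K_X+\det E$ is nef unless $(X,E)=(\PP^n,\bigoplus\OO_{\PP^n}(1))$; and a Bertini-type argument (Ionescu \cite{Ionescu}) represents $c_{n-1}(E)$ by an effective curve, against which a nef class is nonnegative. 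This three-line positivity argument replaces your entire contact-locus analysis and is the main idea you are missing.
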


\begin{proof}
  Theorem~\ref{t:ample} and Example~\ref{e:n=e} show that $\Delta(E)$ is
  irreducible and $\defect(E) > 0$ if and only if
  \[
    \delta(E)
    = \int_X \bigl( c_1(\Omega_X) + c_1(E) \!\bigr) \, c_{n-1}(E) + n \, c_n(E)
    = 0 \, .
  \]
  When $(X, E) = \bigl( \PP^n, \bigoplus_{i=1}^n \OO_{\PP^n}(1) \bigr)$, we
  have $\delta(E) = \bigl(\! (-n-1) + n \bigr) n + n = 0$ and
  $\defect(E) > 0$.  Hence, it suffices to show that, for any very ample $E$
  excluding $\bigoplus_{i=1}^n \OO_{\PP^n}(1)$, we have $\delta(E) > 0$.  If
  $E$ is $1$-jet spanned as well as very ample, then Corollary~\ref{c:1span}
  shows that $\deg \Delta(E) = \delta(E)$.

  Since $E$ is very ample, we have $\int_X c_n(E) >0$; see
  \cite{BG}*{Proposition~2.2}. Thus, it is enough to prove that
  $\int_X \bigl( c_1(\Omega_X) + c_1(E) \!\bigl) \, c_{n-1}(E) \geqslant 0$.
  Let $K_X$ be the canonical divisor on $X$ and let $D$ be the Cartier divisor
  associated to $\det(E)$. Since $E$ is very ample, $D$ is also.  Moreover,
  Theorem~2 in \cite{YZ} establishes that the adjoint divisor $K_X + D$ is nef
  unless $(X ,E) = \bigl( \PP^n, \bigoplus_{i=1}^{n} \OO_{\PP^n}(1)
  \bigr)$. The very ampleness of the vector bundle $E$ implies that
  $c_{n-1}(E) \neq 0$; again see~\cite{BG}*{Proposition~2.2}.  We deduce that
  $c_{n-1}(E)$ is the class of a curve $C$ by a Bertini-type argument; see
  \cite{Ionescu}*{Theorem~B}. It follows that
  \[
    \int_X \bigl( c_1(\Omega_X) + c_1(E) \!\bigr) c_{n-1}(E)
    = (K_X + D) \cdot C
    \geqslant 0 \, . \qedhere
  \]
\end{proof}

To illustrate this corollary, we quickly recompute the degree of the
discriminant locus for nonnegative twists of the tangent bundle on
$\PP^n$; see \cite{ASS}*{Corollary~4.2} and \cite{A}*{Example~4.9}.

\begin{example}
  Let $d$ be a nonnegative integer and let $T_{\PP^n}$ be the tangent bundle
  on $\PP^n$.  We have
  $c_1(\Omega_{\PP^n}) = c_1\bigl( \OO_{\PP^n}(-n-1) \!\bigr)$. From
  the Euler sequence
  \[
    \begin{tikzcd}
      0 \arrow[r]
      & \OO_{\PP^n} \arrow[r]
      & \smash{\displaystyle\bigoplus\limits_{i=1}^{n}} \,\, \OO_{\PP^n}(1) \arrow[r]
      & T_{\PP^n} \arrow[r] & 0 \, ,
    \end{tikzcd}
  \]
  we deduce that
  \[
    \int_{\PP^n} c_i \bigl( T_{\PP^n}(d) \!\bigr)
    = \sum_{j=0}^i \binomial{n-j}{i-j} d^{i-j} \binomial{n+1}{j}
  \]
  for all nonnegative integers $i$.  Combining Propositions~2.1--2.3 in
  \cite{BDS}, the Euler sequence also shows that vector bundle $T_{\PP^n}(d)$
  is very ample and $1$-jet spanned. Thus, Corollary~\ref{c:n=e} establishes
  that the discriminant locus $\Delta \bigl( T_{\PP^n}(d) \!\bigr)$ is an
  irreducible hypersurface and
  \begin{align*}
    \deg \Delta \bigl( T_{\PP^n}(d) \!\bigr)
    &= n d \sum_{j=0}^{n-1} (n-j) \, d^{n-1-j} \binomial{n+1}{j}
      + n \sum_{j=0}^n d^{n-j} \binomial{n+1}{j} \\[-2pt]
    &= n \sum_{j=0}^{n} d^{n-j} (n+1-j) 
      \binomial{n+1}{n+1-j} \\[-6pt]
    &= n(n+1) \sum_{j=0}^n d^{n-j} \binomial{n}{j} 
      = n(n+1)(d+1)^n
      \, .  \qedhere
  \end{align*}
\end{example}

\section{Bogomolov Instability Theorem}

\noindent
In this section, we use calculations involving the discriminant divisor of a
multi-section to give a simple proof of the Bogomolov instability theorem for
vector bundles having rank $2$ on an algebraic surface. At the very least, it
was known to experts that one could give an argument along these lines.
However, since it fits well with the themes of this note and is not widely
known, we felt it worthwhile to include it here.  We refer the reader to
\cite{Langer} for another approach having several points of contact with the
present proof.

Let $X$ be a smooth complex projective surface. We consider a vector bundle
$E$ of rank $2$ on $X$, and denote by $D$ a Cartier divisor associated to
$\det(E)$.  The vector bundle $E$ is \emph{Bogomolov unstable} if there exist
a divisor $A$ and a finite scheme $W \subset X$ (possibly empty) such that the
sequence
\[
  \begin{tikzcd}
    0 \ar[r]
    & \OO_{X}(A) \ar[r]
    & E \ar[r]
    & \OO_{X}(D-A) \otimes \mathcal{I}_W \ar[r]
    & 0 \, ,
  \end{tikzcd}
\]
is exact, $(2A - D)^2 > 4 \, \operatorname{length}(W)$, and
$(2A-D) \cdot H > 0$ for some (or any) ample divisor $H$ on $X$.  Roughly
speaking, being Bogomolov unstable means that the vector bundle $E$ contains
an unexpectedly positive subsheaf.

Bogomolov's theorem asserts that instability is detected numerically.

\begin{theorem}
  \label{t:bog}
  The vector bundle $E$ is Bogomolov unstable if and only if
  \[
    \int _X c_1(E)^2 - 4 \, c_2(E) > 0 \, .
  \]
\end{theorem}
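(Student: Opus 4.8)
The plan is to prove the two implications separately; the forward direction is an immediate Chern-class identity, while the converse carries all the content.

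\emph{Necessity.} Suppose $E$ is Bogomolov unstable, witnessed by the displayed extension. Applying the Whitney formula to that sequence gives $c_1(E) = A + (D-A) = D$ together with $c_2(E) = A\cdot(D-A) + \operatorname{length}(W)$. Substituting, I would compute
\[
  \int_X \bigl( c_1(E)^2 - 4\, c_2(E) \bigr)
  = D^2 - 4\, A\cdot(D-A) - 4 \operatorname{length}(W)
  = (2A-D)^2 - 4 \operatorname{length}(W) \, ,
\]
which is positive precisely because $(2A-D)^2 > 4\operatorname{length}(W)$. I note that this same identity, read in reverse, will make the numerical hypotheses almost automatic in the converse.

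\emph{Sufficiency.} Assume $\int_X \bigl( c_1(E)^2 - 4\, c_2(E) \bigr) > 0$. Working on the threefold $\PP(E)$ with $\xi \coloneqq c_1(L)$ and the Grothendieck relation $\xi^2 = \pi^* c_1(E) \cdot \xi - \pi^* c_2(E)$, I would introduce the balanced class $\Lambda \coloneqq 2\xi - \pi^* c_1(E)$. A short computation gives $\Lambda^2 = \pi^* \bigl( c_1(E)^2 - 4\, c_2(E) \bigr)$, a positive multiple of the fibre class, whence $\Lambda^3 = 2 \int_X \bigl( c_1(E)^2 - 4\, c_2(E) \bigr) > 0$. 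Asymptotic Riemann--Roch on $\PP(E)$ then yields $\chi \bigl( \OO_{\PP(E)}(m\Lambda) \bigr) = \tfrac{1}{6} m^3 \Lambda^3 + O(m^2) \to +\infty$. Since $K_{\PP(E)} - m\Lambda$ restricts to $\OO(-2-2m)$ on each fibre of $\pi$, the top cohomology $H^3 \bigl( \PP(E), \OO_{\PP(E)}(m\Lambda) \bigr)$ vanishes, so for $m \gg 0$ a nonzero section exists and cuts out an effective divisor $\Gamma \subset \PP(E)$, a multisection of $\pi$.

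From here I would discard the vertical part of $\Gamma$, pass to an irreducible horizontal component, and use its generic behaviour over $X$ to produce a rational section of $\pi$; saturating the associated sub-line bundle yields an honest inclusion $\OO_X(A) \hookrightarrow E$ with cokernel $\OO_X(D-A) \otimes \mathcal{I}_W$. The positivity of $\Lambda^2$, interpreted through the Hodge index theorem on $X$, forces $(2A-D) \cdot H > 0$ for every ample $H$. The necessity identity, now run in reverse, then gives $(2A-D)^2 = \int_X \bigl( c_1(E)^2 - 4\, c_2(E) \bigr) + 4 \operatorname{length}(W) > 4 \operatorname{length}(W)$, so both instability inequalities hold. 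The hard part will be this extraction step: Riemann--Roch only guarantees $h^0 + h^2 > 0$, so I must rule out that all the growth lives in $H^2$---equivalently, by Serre duality, that only the ``wrong'' sub-line bundle appears---and then convert a high-degree effective multisection into a genuine section destabilizing $E$ with super-half slope. This is exactly where the sign information in $\Lambda^2$ and the Hodge index theorem combine, and it is the real content of Bogomolov's theorem; the discriminant of the cover $\Gamma \to X$ is the tool I would use to locate the splitting.
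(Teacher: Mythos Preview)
Your overall architecture matches the paper's: show the balanced class $\Lambda = 2\xi - \pi^*c_1(E)$ is big, produce an effective multisection, and extract a destabilizing sub-line bundle. The necessity direction and the final Chern-class identity are fine. But the sufficiency argument has genuine gaps precisely at the steps you flag as ``hard'', and your sketch of how to fill them is not correct.

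\textbf{Extraction from a multisection.} You write that after discarding vertical components you will ``use generic behaviour over $X$ to produce a rational section of $\pi$''. An irreducible horizontal divisor $Z_1 \subset \PP(E)$ of fibre degree $d_1 > 1$ does \emph{not} give a rational section of $\pi$ or a sub-line bundle of $E$; at best it sits inside $\Sym^{d_1}(E)$. The paper's key move is to \emph{prove} $d_1 = 1$ using the discriminant. If $Z_1$ is cut out by a section of $\OO_{\PP(E)}(d_1) \otimes \pi^*\OO_X(-A_1)$, Proposition~\ref{p:branch} computes the class of the branch locus of $Z_1 \to X$ as $(d_1-1)\bigl(d_1\,D - 2A_1\bigr)$, which is effective or zero. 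Intersecting with an ample $H$ gives $(d_1-1)(d_1\,D - 2A_1)\cdot H \geqslant 0$; once one has arranged $(2A_1 - d_1 D)\cdot H > 0$, this forces $d_1 = 1$. You mention the discriminant only as ``the tool I would use'', but this contradiction is the entire content of the argument.

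\textbf{Getting the strict sign.} Without further input, decomposing an effective divisor in $|m\Lambda|$ only yields $\sum_{i\geqslant 1}(2A_i - d_i D) \equiv -2A_0$ with $-A_0$ effective, hence $\sum(2A_i - d_i D)\cdot H \geqslant 0$, not $> 0$. The paper fixes this by first applying Kodaira's lemma to the big class $\Lambda$: for $m \gg 0$ one has $H^0\bigl(\PP(E), \OO_{\PP(E)}(2m)\otimes\pi^*\OO_X(-mD - H)\bigr) \neq 0$, and the extra $-H$ makes the sum strictly positive. Your appeal to the Hodge index theorem does not work here: Hodge index tells you that a class with positive self-intersection has definite sign against amples, but it does not tell you \emph{which} sign, and at this stage you have no information distinguishing $A_1$ from $D-A_1$. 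In the paper the correct sign is forced by the $-H$ twist, not by Hodge index.

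\textbf{The $h^0$ versus $h^2$ issue.} You correctly note that $\chi \to +\infty$ and $h^3 = 0$ do not by themselves give $h^0 > 0$. The paper (following Reid) resolves this on $X$ rather than on $\PP(E)$: the bundle $\Sym^{2m}(E)\otimes\OO_X(-mD)$ has trivial determinant, hence is self-dual, so Serre duality relates $h^2$ to $h^0$ of a twist by $K_X$ and one obtains $h^0 \geqslant C m^3$ directly.
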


\noindent
The defining exact sequence for a Bogomolov unstable vector bundle implies
that 
\[
  \int_X c_2(E) = \operatorname{length}(W) + A \cdot (D-A) \, ,
\]
so the inequality holds. Thus, the essential content of the theorem is the
converse statement: the inequality implies the existence of a destabilizing
subsheaf $\OO_X(A)$.

For our proof of this implication, suppose that
$\int _X \bigl( c_1(E)^2 - 4 \, c_2(E) \!\bigr) > 0$.  Let
$\pi \colon \PP(E) \to X$ the projectivization of $E$, so $\dim \PP(E) =
3$. The starting point, as in other arguments, is the next lemma.

\begin{lemma}
  When the vector bundle $E$ satisfies the inequality in Theorem~\ref{t:bog},
  the line bundle $\OO_{\PP(E)}(2) \otimes \pi^*\OO_X(-D)$ on $\PP(E)$ is
  big. In other words, there is a positive number $C > 0$ such that, for all
  sufficiently large integers $m$, we have
  \[
    h^{0} \bigl(\PP(E), \OO_{\PP(E)}(2m) \otimes \pi^*\OO_X(-mD) \!\bigr)
    = h^{0} \bigl( X, \Sym^{2m}(E) \otimes \OO_X(-mD) \!\bigr)
    \geqslant C \, m^3 \, .
  \]
\end{lemma}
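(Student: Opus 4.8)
The plan is to translate the asserted bigness into a lower bound on a space of sections on the surface $X$, to estimate that dimension via Riemann--Roch, and to feed in the hypothesis $\int_X\bigl(c_1(E)^2 - 4\,c_2(E)\bigr) > 0$. Since $\PP(E)$ is a $\PP^1$-bundle over $X$ and $\OO_{\PP(E)}(1)$ is the tautological quotient, we have $\pi_* \OO_{\PP(E)}(2m) = \Sym^{2m}(E)$, so the projection formula together with the identity $H^0\bigl(\PP(E), \mathcal F\bigr) = H^0\bigl(X, \pi_* \mathcal F\bigr)$ yields the displayed equality of the two cohomology dimensions. Writing $F_m \coloneqq \Sym^{2m}(E) \otimes \OO_X(-mD)$ and recalling that $\dim \PP(E) = 3$, it then suffices to produce $C>0$ with $h^0(X, F_m) \geqslant C\,m^3$ for all large $m$; this is precisely the assertion that $L \coloneqq \OO_{\PP(E)}(2)\otimes\pi^*\OO_X(-D)$ is big.

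First I would compute $\chi(X, F_m)$ by Hirzebruch--Riemann--Roch. If $a_1, a_2$ denote the Chern roots of $E$, then the Chern roots of $F_m$ are $(i-m)(a_1 - a_2)$ for $0 \leqslant i \leqslant 2m$, since twisting by $\OO_X(-mD)$ shifts each Chern root of $\Sym^{2m}(E)$ by $-m\,c_1(E)$. Hence $\operatorname{ch}(F_m) = \sum_{k=-m}^{m} e^{k(a_1-a_2)}$, whose degree-one part vanishes and whose degree-two part equals $\tfrac12\bigl(\sum_{k=-m}^m k^2\bigr)(a_1 - a_2)^2 = \tfrac{m(m+1)(2m+1)}{6}\bigl(c_1(E)^2 - 4\,c_2(E)\bigr)$, because $(a_1-a_2)^2 = c_1(E)^2 - 4\,c_2(E)$. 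Combining this with $\int_X \operatorname{td}_2(X) = \chi(\OO_X)$ gives
\[
  \chi(X, F_m)
  = \tfrac{m(m+1)(2m+1)}{6}\int_X\bigl(c_1(E)^2 - 4\,c_2(E)\bigr)
  + (2m+1)\,\chi(\OO_X),
\]
a cubic polynomial in $m$ whose leading coefficient $\tfrac13\int_X\bigl(c_1(E)^2 - 4\,c_2(E)\bigr)$ is positive by hypothesis. Thus $\chi(X, F_m) \geqslant C'\,m^3$ for some $C'>0$ and all large $m$.

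It remains to pass from $\chi$ to $h^0$, and this is where the main obstacle lies. Because $E$ has rank $2$, we have $E^* \cong E \otimes \OO_X(-D)$, hence $\Sym^{2m}(E^*) \cong \Sym^{2m}(E) \otimes \OO_X(-2mD)$ and therefore $F_m^* \cong \Sym^{2m}(E^*) \otimes \OO_X(mD) \cong F_m$; that is, $F_m$ is self-dual. Serre duality then gives $h^2(X, F_m) = h^0(X, F_m^* \otimes \omega_X) = h^0(X, F_m \otimes \omega_X)$, and from $\chi = h^0 - h^1 + h^2 \leqslant h^0 + h^2$ we deduce
\[
  h^0\bigl(\PP(E), mL\bigr) + h^0\bigl(\PP(E), mL \otimes \pi^*\omega_X\bigr)
  \geqslant \chi(X, F_m) \geqslant C'\,m^3 \, .
\]
The self-duality places the cohomology symmetrically in degrees $0$ and $2$, so, unlike the classical surface criterion, one cannot simply argue that the top cohomology vanishes: this is the crux. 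I would resolve it by observing that at least one of the two summands is at least $\tfrac{C'}{2}m^3$ for infinitely many $m$, and that the second bundle differs from $mL$ only by the \emph{fixed} divisor $\pi^*K_X$. Since bigness depends only on the numerical class and is unaffected by adding a fixed divisor to the powers $mL$—equivalently, by continuity and homogeneity of the volume function, $\operatorname{vol}(L) = 0$ would force $h^0\bigl(\PP(E), mL \otimes \pi^*\omega_X\bigr) = o(m^3)$ as well—either alternative yields $\operatorname{vol}(L) > 0$. Hence $L$ is big, and for a big line bundle $h^0\bigl(\PP(E), mL\bigr)$ grows like $\tfrac{\operatorname{vol}(L)}{6}\,m^3$, which gives the asserted bound for all sufficiently large $m$.
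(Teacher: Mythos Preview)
Your argument is correct and follows the same approach as the paper's sketch: compute the leading term of $\chi(X,F_m)$ by Riemann--Roch, then pass from $\chi$ to $h^0$ via Serre duality and the self-duality of $F_m$ (which is what the paper's remark that $F_m$ has trivial determinant is pointing to). You supply considerably more detail than the paper, which simply records the asymptotic formula and defers to Reid for the remainder; your concluding step via the volume function is a valid way to finish, and could be replaced by the more elementary observation that $h^0(mL+\pi^*K_X)\le h^0(mL)+O(m^2)$ obtained by restricting to members of $|\pi^*H|$ for very ample $H$.
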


\begin{proof}[Idea of proof]
  The asymptotic Riemann--Roch theorem~\cite{Lazarsfeld}*{Theorem~1.1.24}
  shows that
  \[
    \chi \bigl( X, \Sym^{2m}(E) \otimes \OO_X(-mD) \!\bigr)
    = \tfrac{1}{3} \bigl( c_1^2(E) - 4 \, c_2(E) \!\bigr) \, m^3 + O(m^2) \, .
  \]
  The assertion follows via Serre duality and the fact that the vector bundle
  $\Sym^{2m}(E) \otimes \OO_X(-mD)$ has trivial determinant.  For more
  details, see \cite{Reid}*{Proposition~2}.
\end{proof}

Now let $H$ be an ample divisor on $X$. By an argument of
Kodaira~\cite{Lazarsfeld}*{Proposition~2.2.6}, it follows from the lemma that,
for all sufficiently large integers $m$, we have
\[
  H^{0} \bigl( \PP(E), \OO_{\PP(E)}(2m) \otimes \pi^* \OO_{X}(-mD - H) \!\bigr)
  \neq 0 \, .
\]
Fix one such integer $m$ and choose nonzero section
$s \in H^{0}\bigl( \PP(E), \OO_{\PP(E)}(2m) \otimes \pi^* \OO_X(-mD - H)
\bigr)$.  Let $Z \coloneqq \operatorname{Zeroes}(s)$ be the zero locus of the
global section $s$. Hence, the subscheme $Z$ is a divisor on $\PP(E)$ of
relative degree $2m$ over $X$.

We study the irreducible components of $Z$ with the aim of singling out a
particularly interesting one. To begin, let $Z_0 \subset \PP(E)$ denote the
union of any ``vertical'' components of $Z$: $Z_0$ is the preimage under $\pi$
of the zeroes of a section of $\OO_X(-A_0)$ for some anti-effective divisor
$A_0$ on $X$.  Write $Z_1, Z_2, \dotsc, Z_t \subset \PP(E)$ for the remaining
irreducible components of $Z$ allowing repetitions to account for
multiplicities.  In other words, each $Z_i \subset \PP(E)$ is a reduced and
irreducible divisor that is defined by a section of
$\OO_{\PP(E)}(d_i) \otimes \pi^* \OO_X( - A_i)$ for some divisor $A_i$ on $X$
and positive integer $d_i$. By construction, the divisor
$A_0 + A_1 + \dotsb + A_t$ is linearly equivalent to $mD + H$ and
$d_1 + d_2 + \dotsb + d_t = 2m$, so the divisor
$ \sum_{i \geqslant 1} \bigl( A_i - \tfrac{d_i}{2} D \bigr)$ is numerically
equivalent to $H - A_0$.  Since $-A_0$ is an effective divisor, it follows
that
$\bigl( \textstyle\sum_{i \geqslant 1} 2 \, A_i - d_i \, D \bigr) \cdot H >
0$.  By reindexing the components if necessary, we may assume that
$(2 \, A_1 - d_1 \, D) \cdot H > 0$.

The idea is to consider the discriminant divisor $\Delta \subseteq X$ over
which the fibre of the map $Z_1 \to X$ is not $d_1$ distinct points.
Specifically, Proposition~\ref{p:branch} shows that the class of $\Delta$ is
given by
\[
  \delta = d_1(d_1-1) \, D - 2(d_1-1) \, A_1 
\]
and $\delta$ is either effective or zero, so $\delta \cdot H \geqslant
0$. However, if $d_1 > 1$, then this would contradict the assumption that
$(2 \, A_1 - d_1 \, D) \cdot H > 0$.  Therefore, we have $d_1 = 1$ and $Z_1$
is defined by a (necessarily saturated) section in
$H^{0}\bigl( \PP(E), \OO_{\PP(E)}(1) \otimes \pi^* \OO_X(-A_1) \!\bigr)$. The
corresponding section in $H^{0} \!\bigl( X, E \otimes \OO_X(-A) \!\bigr)$
defines a closed subscheme $W$ of $X$ and gives rise to a short exact sequence
\[
  \begin{tikzcd}
    0 \ar[r]
    & \OO_{X}(A_1) \ar[r]
    & E \ar[r]
    & \OO_{X}(D-A_1) \otimes \mathcal{I}_W \ar[r]
    & 0 \, .
  \end{tikzcd}
\]
The Bogomolov inequality $\int _X c_1(E)^2 - 4 \, c_2(E) > 0$ implies that
$(2A-D)^2 > 4 \, \operatorname{length}(W)$ and $(2A-D) \cdot H > 0$.
Therefore, we have established that the vector bundle $E$ is unstable.

It remains to prove the following proposition.

\begin{proposition}
  \label{p:branch}
  Let $E$ be a rank two vector bundle on $X$ with $\det(E) = \OO_X(D)$, let
  $\pi \colon \PP(E) \to X$ be the projectivization of $E$, and consider a
  reduced and irreducible divisor
  \[
    \begin{tikzcd}[row sep = 1.0em]
      Y \ar[rr, hook] \ar[dr, "f\;\;\;" below] & & \PP(E) \ar[dl, "\pi"] \\
      & X &
    \end{tikzcd}
  \]
  defined by a section of $\OO_{\PP(E)}(d) \otimes \pi^*\OO_X(-A)$ for some
  positive integer $d$. The locus $\Delta(f) \subseteq X$ of points $x \in X$
  over which the fibre $f^{-1}(x)$ fails to consist of $d$ distinct points
  supports an effective divisor in the class
  $\delta = d(d-1) \, D + 2(d-1) \, A$.  In particular, this class is
  effective or zero.
\end{proposition}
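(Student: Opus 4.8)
The plan is to recognize $f \colon Y \to X$ as a finite flat morphism of degree $d$ and to identify $\Delta(f)$ with its branch (discriminant) divisor, realized as the zero locus of an explicit section of a line bundle whose class is $\delta$; effectivity then comes for free. First I would check that $f$ is finite and flat. Since $Y$ lies in the class $d \, \xi - \pi^* A$ with $\xi \coloneqq c_1 \bigl( \OO_{\PP(E)}(1) \!\bigr)$ and $d > 0$, its restriction to each fibre $\PP^1$ of $\pi$ is a length-$d$ subscheme, so $Y$ contains no fibre and $f$ is quasi-finite, hence finite by properness. As $Y$ is an effective Cartier divisor on the smooth threefold $\PP(E)$, it is Cohen--Macaulay, and miracle flatness over the regular surface $X$ forces $f$ to be flat; thus $f_* \OO_Y$ is locally free of rank $d$.

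The cleanest route to the class goes through $f_* \OO_Y$. The trace pairing gives a section of $\bigl( \det f_* \OO_Y \bigr)^{\otimes -2}$ whose vanishing is exactly the locus where the fibre $f^{-1}(x)$ is nonreduced, that is, $\Delta(f)$; hence $\Delta(f)$ is effective of class $-2 \, c_1(f_* \OO_Y)$. To compute $c_1(f_* \OO_Y)$ I would push the structure sequence $0 \to \OO_{\PP(E)}(-d) \otimes \pi^* \OO_X(A) \to \OO_{\PP(E)} \to \OO_Y \to 0$ forward along $\pi$. Here $\pi_* \bigl( \OO_{\PP(E)}(-d) \otimes \pi^* \OO_X(A) \bigr) = 0$ and $R^1 \pi_* \OO_{\PP(E)} = 0$, while relative Serre duality with $\omega_{\PP(E)/X} = \OO_{\PP(E)}(-2) \otimes \pi^* \OO_X(D)$ gives $R^1 \pi_* \bigl( \OO_{\PP(E)}(-d) \otimes \pi^* \OO_X(A) \bigr) \cong \Sym^{d-2}(E^*) \otimes \OO_X(A - D)$. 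One thus obtains
\[
  \begin{tikzcd}[column sep = 1.2em]
    0 \ar[r] & \OO_X \ar[r] & f_* \OO_Y \ar[r]
    & \Sym^{d-2}(E^*) \otimes \OO_X(A-D) \ar[r] & 0 \, .
  \end{tikzcd}
\]
Since $c_1 \bigl( \Sym^{d-2}(E^*) \!\bigr) = -\tfrac{(d-1)(d-2)}{2} D$ and the quotient has rank $d-1$, assembling Chern classes yields $c_1(f_* \OO_Y) = (d-1) A - \tfrac{d(d-1)}{2} D$, whence $[\Delta(f)] = -2 \, c_1(f_* \OO_Y) = d(d-1) \, D - 2(d-1) \, A = \delta$.

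As a conceptual cross-check I would view $s$ through $\pi_* s \in H^0 \bigl( X, \Sym^d E \otimes \OO_X(-A) \bigr)$ as a family of binary forms of degree $d$ on the fibres, so that $\Delta(f)$ is the fibrewise vanishing of the classical discriminant of a binary $d$-form. That discriminant is a homogeneous invariant of degree $2d - 2$ and $\mathrm{GL}_2$-weight $d(d-1)$, so applied fibrewise it is a section of $(\det E)^{\otimes d(d-1)} \otimes \OO_X(-A)^{\otimes(2d-2)}$, recovering the class $d(d-1) \, D - 2(d-1) \, A$. For effectivity, note that this discriminant is a genuine global section of $\OO_X(\delta)$, so its zero scheme is an effective divisor unless the section vanishes identically. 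Because $Y$ is reduced and irreducible and we work over $\CC$, the extension $\CC(Y)/\CC(X)$ is separable, so $f$ is generically étale and the discriminant does not vanish identically; hence $\Delta(f)$ is effective of class $\delta$, and is empty precisely when $f$ is everywhere unramified, in which case $\delta = 0$.

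I expect the main obstacle to be the cohomological bookkeeping: fixing the relative-duality twist $\omega_{\PP(E)/X} = \OO_{\PP(E)}(-2) \otimes \pi^* \OO_X(D)$ and the identification of $R^1 \pi_*$, then assembling the Chern classes without sign errors. In particular the sign of the $A$-term must come out as $-2(d-1) A$, matching the $-2(d-1) A_1$ used in the instability argument; this is the place where I would be most careful. A secondary point needing care is the precise assertion that the trace-form (equivalently, fibrewise discriminant) section vanishes exactly on the locus where $f^{-1}(x)$ fails to be $d$ distinct points, which relies on the flatness of $f$ together with the characteristic-zero hypothesis.
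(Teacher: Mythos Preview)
Your argument is correct and takes a genuinely different route from the paper's. The paper works upstairs: it identifies the ramification locus $\Gamma \subset Y$ as the first degeneracy locus of $df$, computes its class either via the relative canonical divisor $K_{Y/X} = (K_{\PP(E)/X}+Y)\big|_Y$ and adjunction, or equivalently as $c_2$ of the relative jet bundle $J_1^{\pi}\bigl(\OO_{\PP(E)}(d)\otimes\pi^*\OO_X(-A)\bigr)$, and then pushes $\gamma$ down to $X$ using the Grothendieck relation for $\PP(E)$. You instead work downstairs: you prove $f$ is finite flat, invoke the trace--form discriminant to realize $\Delta(f)$ as the zero scheme of a section of $(\det f_*\OO_Y)^{\otimes -2}$, and compute $c_1(f_*\OO_Y)$ by pushing forward the Koszul sequence $0 \to \OO_{\PP(E)}(-d)\otimes\pi^*\OO_X(A) \to \OO_{\PP(E)} \to \OO_Y \to 0$ along $\pi$, using relative duality to identify the $R^1\pi_*$ term. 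Your binary--discriminant cross--check is a third independent computation. Each approach has its virtues: the paper's is more uniform with the jet--bundle and degeneracy--locus viewpoint of the earlier sections and needs no flatness verification, while yours is self--contained (no Thom--Porteous, no jet bundles) and makes the effectivity transparent, since the discriminant is literally a section of a line bundle. You are also right to flag the sign: the class that both your computation and the paper's own application to instability produce is $d(d-1)\,D - 2(d-1)\,A$, so the ``$+2(d-1)A$'' in the displayed statement is a typo. One small edge case to mention for completeness: when $d=1$ the $R^1\pi_*$ term vanishes and $f_*\OO_Y \cong \OO_X$, so $\delta = 0$ directly, consistent with your formula.
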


\begin{proof}
  Consider the set
  $\Gamma \coloneqq \bigl\{ y \in Y \mathrel{\big|} \text{ $f$ is not \'etale
    at $y$} \bigr\}$.  The map $f$ is generically \'etale because $Y$ is
  reduced. It follows that $\Gamma$ has dimension $1$ (or is empty) and
  $\Delta(f) = f(\Gamma)$. We claim that, viewed as a cycle of codimension $2$
  on $\PP(E)$, $\Gamma$ supports the effective class
  \begin{equation}
    \label{d:class}
    \tag{$*$}
    \gamma \coloneqq \Bigl( \! (d-2) \, c_1 \big( \OO_{\PP(E)}(1) \!\big) +
    \pi^* (D-A) \!\Bigr) \cdot \Bigl( d \, c_1 \big( \OO_{\PP(E)}(1) \!\big) -
    \pi^* A \!\Bigr) \, .
  \end{equation}
  There are at least two ways to confirm this claim.
  \begin{compactitem}[$\bullet$]
  \item As a cycle on $Y$, $\gamma$ is the class of the first degeneracy locus of
    the induced differential $df \colon f^* \Omega_X \to \Omega_Y$, so
    $\gamma = c_1(\Omega_Y - f^* \Omega_X) = c_1 \bigl( \OO_Y(K_Y - f^* K_X)
    \!\bigr)$ which is the class of the relative canonical divisor
    $K_{Y/X} \coloneqq K_Y - f^* K_X$.  By the adjunction formula, we have
    $K_{Y/X} = (K_{\PP(E)/X} + Y)|_{Y}$.  Thus, as a cycle on $\PP(E)$, we
    have $\gamma = [(K_{\PP(E)/X} + Y)|_{Y}] \cap [Y]$.  Since
    $[Y] = c_1\bigl( \OO_{\PP(E)}(d) \otimes \pi^* \OO_X(-A) \!\bigr)$, the
    equation \eqref{d:class} follows from
    $[K_{\PP(E)/X}] = c_1\bigl( \OO_{\PP(E)}(-2) \otimes \pi^* \OO_X(D)
    \!\bigr)$; see \cite{Lazarsfeld}*{Section~7.3.A}.
  \item The section $s$ in
    $H^{0} \bigl( X, \OO_{\PP(E)}(d) \otimes \pi^*\OO_X(-A) \!\bigr)$ defining
    $Y$ lifts to a section of the first relative jet bundle of
    $ds \in H^{0} \!\left( \PP(E), J_1^{\pi} \!\bigl( \OO_{\PP(E)}(d) \otimes
      \pi^*\OO_X(-A) \!\bigr) \!\right)$, and
    $\Gamma = \operatorname{Zeroes}(ds)$. From the canonical short exact
    sequence
    \[
      \begin{tikzcd}[column sep = 1.0em]
        0 \ar[r]
        & \Omega_{\PP(E)/X}(d) \otimes \pi^*\OO_X(-A) \ar[r]
        & J_1^{\pi} \!\bigl( \OO_{\PP(E)}(d) \otimes \pi^*\OO_X(-A) \!\bigr) \ar[r]
        & \OO_{\PP(E)}(d) \otimes \pi^*\OO_X(-A) \ar[r]
        & 0
      \end{tikzcd}
    \]
    we see that
    $\gamma = c_2 \left( J_1^{\pi} \!\bigl( \OO_{\PP(E)}(d) \otimes
      \pi^*\OO_X(-A) \!\bigr) \!\right)$, which again establishes the equation
    \eqref{d:class}.
  \end{compactitem}
  It remains to check that $\pi_*(\gamma) = \delta$. This follows from the
  Grothendieck relation
  \[
    c_1 \bigl( \OO_{\PP(E)}(1) \! \bigr)^2
    - \pi^* \bigl( c_1(E) \!\bigr) \cdot c_1 \bigl( \OO_{\PP(E)}(1) \! \bigr)
    + \pi^* \bigl( c_2(E) \!\bigr) = 0 \, ,
  \]
  $\pi_* \!\Bigl(\! \pi^* (\alpha) \cdot c_1 \bigl( \OO_{\PP(E)}(1) \! \bigr)
  \!\!\Bigr) = \alpha$, and $\pi_* \big( \pi^*(\beta) \big) = 0$ for any
  classes $\alpha \in A^1(X)$ and $\beta \in A^2(X)$.
\end{proof}

\subsection*{Acknowledgements}

We thank Antonio Lanteri and Roberto Mu\~noz for useful insights in the
history of discriminant loci.  We are also grateful to Adrian Langer for
drawing our attention to \cite{Langer} and for some valuable discussions.  The
second author was partially supported by NSF grant DMS-1739285 and the third
author was partially supported by NSERC.

\begin{bibdiv}
  \begin{biblist}

    \bib{A}{article}{
      author={Abo, Hirotachi},
      title={On the discriminant locus of a rank $n-1$ vector bundle on
        $\mathbb{P}^{n-1}$},
      journal={Port. Math.},
      volume={77},
      date={2020},
      number={3-4},
      pages={299--343},
    }

    \bib{ASS}{article}{
      author={Abo, Hirotachi},
      author={Seigal, Anna},
      author={Sturmfels, Bernd}, 
      title={Eigenconfigurations of tensors},
      conference={
        title={Algebraic and geometric methods in discrete mathematics},
      },
      journal={Contemp. Math.},
      publisher={Amer. Math. Soc., Providence, RI},
      volume={685},
      date={2017},
      pages={1--25},
    }

    \bib{BDS}{article}{
      label={BDS99},
      author={Beltrametti, Mauro C.},
      author={Di Rocco, Sandra},
      author={Sommese, Andrew J.},
      title={On generation of jets for vector bundles},
      journal={Rev. Mat. Complut.},
      volume={12},
      date={1999},
      number={1},
      pages={27--45},
    }

    \bib{BFS}{article}{
      author={Beltrametti, Mauro~C.},
      author={Fania, Maria Lucia},
      author={Sommese, Andrew J.}, 
      title={On the discriminant variety of a projective manifold},
      journal={Forum Math.},
      volume={4},
      date={1992},
      pages={529--547},
    }

    \bib{BG}{article}{
      author={Bloch, Spencer},
      author={Gieseker, David},
      title={The positivity of the Chern classes of an ample vector bundle},
      journal={Invent. Math.},
      volume={12},
      date={1971},
      pages={112--117},
    }

    \bib{Fulton}{book}{
      author={Fulton, William},
      title={Intersection theory},
      series={Ergebnisse der Mathematik und ihrer Grenzgebiete},
      volume={2},
      edition={2},
      publisher={Springer-Verlag, Berlin},
      date={1998},
      pages={xiv+470},
    }    

    \bib{GKZ}{book}{
      author={Gelfand, Izrail Moiseevich},
      author={Kapranov, Mikhail M.},
      author={Zelevinsky, Andrey V.},
      title={Discriminants, resultants and multidimensional determinants},
      series={Modern Birkh\"{a}user Classics},
      publisher={Birkh\"{a}user Boston, Inc., Boston, MA},
      date={2008},
      pages={x+523},
    }

    \bib{Ionescu}{article}{
      author={Ionescu, Paltin},
      title={Embedded projective varieties of small invariants. III},
      conference={
        title={Algebraic geometry},
        address={L'Aquila},
        date={1988},
      },
      book={
        series={Lecture Notes in Math.},
        volume={1417},
        publisher={Springer, Berlin},
      },
      date={1990},
      pages={138--154},
    }

   \bib{Langer}{article}{
      author={Langer, Adrian},
      title={The Bogomolov-Miyaoka-Yau inequality for log canonical surfaces},
      journal={J. London Math. Soc. (2)},
      volume={64},
      date={2001},
      number={2},
      pages={327--343},
    }    

   \bib{LM}{article}{
      author={Lanterri, Antonio},
      author={Mu\~noz, Roberrto},
      title={Low dimensional discriminant loci and scrolls},
      journal={Indiana Univ. Math. J.},
      volume={58},
      date={2009},
      pages={2205--2225},
    }

    \bib{Lazarsfeld}{book}{
      author={Lazarsfeld, Robert},
      title={Positivity in algebraic geometry. II},
      series={Ergebnisse der Mathematik und ihrer Grenzgebiete},
      volume={49},
      publisher={Springer-Verlag, Berlin},
      date={2004},
      pages={xviii+385},
    }

    \bib{Reid}{article}{
      author={Reid, Miles},
      title={Bogomolov's theorem $c_{1}^{2} \leqslant 4c_{2}$},
      conference={
        title={Proceedings of the International Symposium on Algebraic
          Geometry},
        address={Kyoto Univ., Kyoto},
        date={1977},
      },
      book={
        publisher={Kinokuniya Book Store, Tokyo},
      },
      date={1978},
      pages={623--642},
    }    

    \bib{YZ}{article}{
      author={Ye, Yun-Gang},
      author={Zhang, Qi},
      title={On ample vector bundles whose adjunction bundles are not
        numerically effective},
      journal={Duke Math. J.},
      volume={60},
      date={1990},
      number={3},
      pages={671--687},
    }    
    
  \end{biblist}
\end{bibdiv}

\raggedright

\end{document}